\newtheorem{theorem}{Theorem}[section]
\newtheorem{corollary}{Corollary}[section]
\newtheorem{lemma}{Lemma}[section]
\newtheorem{claim}{Claim}[section]
\newtheorem{remark}{Remark}[section]
\def\R{{\mathfrak R}\, }
\def\D{{\mathfrak D}\, }
\def\A{{\mathfrak A}\, }
\begin{document}
\baselineskip.20in

\begin{center}{\bf \LARGE $*$-Lie-type maps on $C^*$-algebras}\\
\vspace{.2in}
\noindent {\bf Ruth Nascimento Ferreira}\\
{\it Federal University of Technology,\\
Avenida Professora Laura Pacheco Bastos, 800,\\
85053-510, Guarapuava, Brazil.}\\
e-mail: ruthferreira@utfpr.edu.br
\vspace{.2in}
\\
\noindent {\bf Bruno Leonardo Macedo Ferreira}\\
{\it Federal University of Technology,\\
Avenida Professora Laura Pacheco Bastos, 800,\\
85053-510, Guarapuava, Brazil.}\\
e-mail: brunolmfalg@gmail.com
\vspace{.2in}
\\
\noindent {\bf Henrique Guzzo Junior}\\
{\it University of S\~{a}o Paulo,\\
Rua do Mat\~{a}o, 1010,\\
05508-090, S\~{a}o Paulo, Brazil.}\\
e-mail: guzzo@ime.usp.br
\vspace{.2in}
\\
\noindent {\bf Bruno Tadeu Costa}\\
{\it Federal University of Santa Catarina,\\
Rua Jo\~{a}o Pessoa, 2750,\\   
89036-256, Blumenau, Brazil.}\\
e-mail: b.t.costa@ufsc.br

\end{center}
\begin{abstract} 
Let $\A$ and $\A'$ be two  
$C^*$-algebras with identities $I_{\A}$ and $I_{\A'}$, respectively, and $P_1$ and $P_2 = I_{\A} - P_1$ nontrivial symmetric
projections in $\A$. In this paper we study the  characterization of multiplicative $*$-Lie-type maps. In particular, 
if $\mathcal{M}$ is a factor von Neumann algebra then every complex scalar multiplication bijective unital multiplicative 
$*$-Lie-type map is $*$-isomorphism.
\end{abstract}
{\bf {\it AMS 2010 Subject Classification:}} 	47B48, 46L05.\\
{\bf {\it Keywords:}} $C^*$-algebra, factor von Neumann algebras, multiplicative $*$-Lie-type maps, $*$-isomorphism\\

\section{Introduction and Preliminaries}
The study of additivity of maps strongly attracts attention of mathematicians, so that the first quite surprising result is due to Martindale who imposed conditions on a ring such that multiplicative bijective maps are all additive \cite{Mart}. Thenceforth, diverse works have been published considering different types of associative and non-associative algebras. Among them we can mention \cite{Fer, Fer1, Fer2, Fer3, Fer4, bruth, FerGur1, chang}. In order to further develop the study of additivity of maps, the researches incorporated two new product into the theory, presented by Bre$\check{s}$ar and Fo$\check{s}$ner in \cite{brefos1, brefos2}, where the definition is as follows: for $X, Y \in \R$, where $\R$ is a $*-$ring, we denote by $X\bullet Y = XY+YX^{*}$ and $[X, Y]_{*} = XY-YX^{*}$ the $*$-Jordan product and the $*$-Lie product, respectively. In \cite{CuiLi}, the authors proved that a map $\Phi$ between two factor von Newmann algebras is a $*$-ring isomorphism if and only if $\Phi([X, Y]_{*}) = [\Phi(X),\Phi(Y)]_{*}$. In \cite{Ferco}, Ferreira and Costa extended these new products and defined two other types of applications, named multiplicative $*$-Lie n-map and multiplicative $*$-Jordan n-map
and proved that, under certain conditions, an application between $C^*$-algebras that is multiplicative $*$-Lie n-map and multiplicative $*$-Jordan n-map is a $*$-ring isomorphism.
In the second paper of this series \cite{Ferco2}, Ferreira and Costa prove when a multiplicative $*$-Jordan n-map is a $*$-ring isomorphism. As a consequence of their main result, 
they provide an application on von Neumann algebras, factor von Neumann algebras and prime algebras. Furthermore, they generalize the Main Theorem in \cite{LiLuFang}.
With this picture in mind, in this article we will discuss when a multiplicative $*$-Lie n-map is a $*$-isomorphism and, just as it was done in \cite{Ferco2}, 
we provide an application on von Neumann algebras, factor von Neumann algebras and prime algebras.

Let us define the following sequence of polynomials, as presented in \cite{Ferco}: 
$$p_{1_*}(x) = x\, \,  \text{and}\, \,  p_{n_*}(x_1, x_2, \ldots , x_n) = \left[p_{(n-1)_*}(x_1, x_2, \ldots , x_{n-1}) , x_n\right]_{*},$$
for all integers $n \geq 2$. Thus, $p_{2_*}(x_1, x_2) = \left[x_1, x_2\right]_{*}, \ p_{3_*} (x_1, x_2, x_3) = \left[\left[x_1, x_2\right]_{*} , x_3\right]_{*}$, etc. 
Note that $p_{2_*}$ is the product introduced by Bre$\check{s}$ar and Fo$\check{s}$ner \cite{brefos1, brefos2}. Then, using the nomenclature introduced in \cite{Ferco} 
we have a new class of maps (not necessarily additive): given two rings $\R$ and $\R'$, $\varphi : \R \longrightarrow \R'$ is a \textit{multiplicative $*$-Lie $n$-map} if
\begin{eqnarray*}\label{ident1}
&&\varphi(p_{n_*} (x_1, x_2, . . . , x_i , . . . ,x_n)) =  p_{n_*} (\varphi(x_1), \varphi(x_2), . . . , \varphi(x_i), . . .,\varphi(x_n)),
\end{eqnarray*}
where $n \geq 2$ is an integer. Multiplicative $*$-Lie $2$-map, $*$-Lie $3$-map 
and $*$-Lie $n$-map are collectively referred to as \textit{multiplicative $*$-Lie-type maps}.

By a $C^*$-algebra we mean a complete normed complex algebra (say $\A$) endowed with a conjugate-linear algebra involution $*$, 
satisfying $||a^*a||=||a||^2$ for all $a \in \A$. Moreover, a $C^*$-algebra is a {\it prime} $C^*$-algebra if $A\A B = 0$ for 
$A,B \in \A$ implies either $A=0$ or $B=0$.

We find it convenient to invoke the noted Gelfand-Naimark theorem that state a $C^*$-algebra $\A$ is $*$-isomorphic to a
$C^*$-subalgebra $\D \subset \mathcal{B}(\mathcal{H})$, where $\mathcal{H}$ is a Hilbert space. So from now on we shall
consider elements of a $C^*$-algebra as operators.

Let be $P_1$ a nontrivial projection in $\A$ and $P_2 = I_{\A} - P_1$ where $I_{\A}$ is the identity of $\A$. Then $\mathfrak{A}$ has a decomposition
$\mathfrak{A}=\mathfrak{A}_{11}\oplus \mathfrak{A}_{12}\oplus
\mathfrak{A}_{21}\oplus \mathfrak{A}_{22},$ where
$\mathfrak{A}_{ij}=P_{i}\mathfrak{A}P_{j}$ $(i,j=1,2)$.

\section{Main theorem}

In the following we shall prove a part of the main result of this paper.

\begin{theorem}\label{mainthm1} 
Let $\A$ and $\A'$ be two $C^*$-algebras with identities $I_{\A}$ and $I_{\A'}$, respectively, and $P_1$ and $P_2 = I_{\A} - P_1$ nontrivial symmetric projections in $\A$. Suppose that $\A$ satisfies
\begin{eqnarray*}
  &&\left(\spadesuit\right) \ \ \  \ \ \  P_j\A X = \left\{0\right\} \ \ \  \mbox{implies} \ \ \ X = 0.
\end{eqnarray*}
Even more, suppose that $\varphi: \A \rightarrow \A'$ is a bijective unital map which satisfies
\begin{eqnarray*}
 &&\left(\bullet\right)\varphi(p_{n_*}(A,B,\Xi,...,\Xi)) = p_{n_*}(\varphi(A),\varphi(B),\varphi(\Xi),...,\varphi(\Xi)),
\end{eqnarray*}
 for all $A, B \in \A$ and $\Xi \in \left\{P_1, P_2,I_{\A}\right\}$. Then $\varphi$ is $*$-additive.
\end{theorem}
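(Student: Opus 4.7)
My plan is to follow the Peirce-decomposition strategy standard for $*$-Lie-type-map problems, organized as a chain of claims that build up to additivity and then to $*$-preservation. Using the decomposition $\A = \A_{11} \oplus \A_{12} \oplus \A_{21} \oplus \A_{22}$ with $\A_{ij} = P_i \A P_j$, the algebraic heart is the identity $[X, P_j]_* = X P_j - P_j X^*$, whose Peirce components can be read off cleanly because $P_j = P_j^*$. Iterating this with $\Xi \in \{P_1, P_2\}$ keeps $p_{n_*}(A, B, \Xi, \ldots, \Xi)$ computable, while the choice $\Xi = I_\A$ reduces the iterated bracket to expressions built from $X - X^*$ and is therefore well-adapted to extracting $*$-information. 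Throughout, the hypothesis $(\spadesuit)$ will serve as a cancellation tool: from $P_j A T = P_j A S$ valid for all $A \in \A$, I conclude $T = S$.

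The additivity part of the proof proceeds through the following claims, in order: (i) $\varphi(0) = 0$, deduced from $p_{n_*}(0, B, \Xi, \ldots, \Xi) = 0$ together with bijectivity and unitality; (ii) cross-diagonal additivity $\varphi(A_{12} + B_{21}) = \varphi(A_{12}) + \varphi(B_{21})$, obtained by applying $\varphi$ to $p_{n_*}(A_{12} + B_{21}, T, \Xi, \ldots, \Xi)$ for suitable $T$ and $\Xi \in \{P_1, P_2\}$ and comparing with the split version; (iii) diagonal additivity $\varphi(A_{11} + B_{22}) = \varphi(A_{11}) + \varphi(B_{22})$, by an analogous bracket computation; (iv) full Peirce additivity $\varphi(\sum_{ij} A_{ij}) = \sum_{ij} \varphi(A_{ij})$, by iterating (ii) and (iii); (v) additivity within each fixed Peirce component $\A_{ij}$, proved by evaluating $p_{n_*}(A_{ij} + B_{ij}, T, \Xi, \ldots, \Xi)$ for a generic $T \in \A$, invoking (iv), and then cancelling the auxiliary factor $P_j A$ via $(\spadesuit)$; and (vi) full additivity of $\varphi$, assembled from (iv) and (v).

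Finally, to obtain $\varphi(A^*) = \varphi(A)^*$, I would take $\Xi = I_\A$ and exploit that for $n \geq 3$ the iteration $p_{n_*}(A, B, I_\A, \ldots, I_\A)$ collapses to a nonzero scalar multiple of $[A, B]_* - [A, B]_*^*$, from which the already-established additivity lets me isolate the $*$-content; the case $n = 2$ reduces to the classical multiplicative $*$-Lie-product setting, which can be handled directly using $[P_1, \cdot]_*$ together with the Peirce decomposition. The main technical obstacle I anticipate lies in steps (ii), (iii), and (v): for general $n$ each layer of bracketing mixes Peirce components with their adjoints, so tracking the Peirce coordinates at depth $n - 1$ and massaging the outcome into the form $P_j A T = P_j A S$ required to apply $(\spadesuit)$ is a delicate bookkeeping problem rather than a single clean calculation. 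I expect to handle this either by induction on $n$ or by a uniform template carrying explicit coefficients that reduce, at the end, to nonzero multiples of the desired additivity identities.
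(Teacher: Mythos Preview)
Your overall Peirce-decomposition skeleton and the use of $\Xi = I_{\A}$ to extract $*$-preservation match the paper's route. However, there is a genuine gap in how you propose to execute steps (ii), (iii), and (v). You say you will ``apply $\varphi$ to $p_{n_*}(A_{12}+B_{21}, T, \Xi, \ldots, \Xi)$ and compare with the split version.'' By $(\bullet)$ this becomes $p_{n_*}(\varphi(A_{12}+B_{21}), \varphi(T), \varphi(\Xi), \ldots, \varphi(\Xi))$, an expression living entirely in $\A'$. In $\A'$ you have no Peirce decomposition, no information that $\varphi(P_j)$ is a projection (that is established only later, in the proof of Theorem~\ref{mainthm2}, under the extra complex-scalar-multiplication hypothesis), and no analogue of $(\spadesuit)$. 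So from this computation you cannot deduce anything about $\varphi(A_{12}+B_{21}) - \varphi(A_{12}) - \varphi(B_{21})$; your cancellation tool $(\spadesuit)$ is simply unavailable on the target side.

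The device you are missing is the surjective-preimage trick: for each pair $X, Y$ whose images you want to add, pick $H \in \A$ with $\varphi(H) = \varphi(X) + \varphi(Y)$, note that multilinearity of $p_{n_*}$ on the \emph{target} side gives $\varphi(p_{n_*}(H, Z, \Xi, \ldots, \Xi)) = \varphi(p_{n_*}(X, Z, \Xi, \ldots, \Xi)) + \varphi(p_{n_*}(Y, Z, \Xi, \ldots, \Xi))$, use injectivity to drop $\varphi$, and then analyse the Peirce components of $H$ inside $\A$, where $(\spadesuit)$ applies. This is exactly how the paper runs every additivity step. Two further corrections: your step (iv) does not follow by ``iterating (ii) and (iii)'' --- additivity on $\A_{11}\oplus\A_{22}$ and on $\A_{12}\oplus\A_{21}$ separately does not yield additivity across the two blocks, so a fresh preimage argument is required; and in step (v) the off-diagonal and diagonal cases are not handled by a single uniform template. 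The paper treats $j\neq k$ first, via the identity $p_{n_*}(P_1+A_{12}, P_2+B_{12}, P_2, \ldots, P_2) = A_{12}+B_{12}-A_{12}^*-B_{12}^*$ together with the already-proved full Peirce additivity to split $\varphi(P_1+A_{12})$ and $\varphi(P_2+B_{12})$, and only afterwards deduces the diagonal case $j=k$ using the off-diagonal one as an ingredient.
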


The following claims and lemmas have the same hypotheses as the Theorem \ref{mainthm1} and
we need them to prove the $*$-additivity of $\varphi$. 
\begin{claim}\label{claim1}
$*(\A_{jk})\subset \A_{kj}$, for $j,k\in \{1,2\}$.
\end{claim}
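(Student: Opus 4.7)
The plan is to reduce the claim to a direct computation using the assumption that the projections $P_1, P_2$ are symmetric, i.e.\ satisfy $P_j^* = P_j$ for $j=1,2$. Under the Peirce decomposition $\A = \A_{11} \oplus \A_{12} \oplus \A_{21} \oplus \A_{22}$, an element lies in $\A_{jk} = P_j \A P_k$ precisely when it has the form $P_j X P_k$ for some $X \in \A$; membership in $\A_{kj}$ will then be verified by exhibiting such a factorization for the adjoint.

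Concretely, I would fix $A_{jk} \in \A_{jk}$, write $A_{jk} = P_j X P_k$ for some $X \in \A$, and compute
\[
A_{jk}^* \;=\; (P_j X P_k)^* \;=\; P_k^{\,*}\, X^{*}\, P_j^{\,*} \;=\; P_k\, X^{*}\, P_j,
\]
where the last equality uses the symmetry $P_i = P_i^{\,*}$ of the given projections. Since $P_k X^{*} P_j \in P_k \A P_j = \A_{kj}$, we conclude $A_{jk}^{*} \in \A_{kj}$, which is exactly the inclusion $*(\A_{jk}) \subset \A_{kj}$.

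There is no real obstacle here: the statement is essentially a formal consequence of the conjugate-linearity and anti-multiplicativity of the involution together with the self-adjointness of $P_1$ and $P_2$. The hypothesis $(\spadesuit)$, the bijectivity and unital character of $\varphi$, and the identity $(\bullet)$ play no role in this particular claim; they will be needed for the subsequent lemmas that build toward the $*$-additivity of $\varphi$ asserted in Theorem~\ref{mainthm1}.
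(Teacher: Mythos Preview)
Your argument is correct and is exactly the computation the paper has in mind: its own proof is the single line ``It follows from the symmetry of $P_1$ and $P_2$,'' and your explicit calculation $(P_j X P_k)^* = P_k X^* P_j$ simply spells this out.
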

\begin{proof}
It follows from the symmetry of $P_1$ and $P_2$.
\end{proof}

The next result can be found in \cite{Ferco2} but for brevity we put the proof here
\begin{claim}\label{claim2}
Let $X,Y$ and $H$ be in $\A$ such that $\varphi(H) = \varphi(X) + \varphi(Y)$. Then,
 given $Z \in \A$,
$$\varphi(p_{n_*}(H,Z,\Xi,...,\Xi)) = \varphi(p_{n_*}(X,Z,\Xi,...,\Xi))
                                                   + \varphi(p_{n_*}(Y,Z,\Xi,...,\Xi))$$
and
$$
\varphi(p_{n_*}(Z,H,\Xi,...,\Xi)) = \varphi(p_{n_*}(Z,X,\Xi,...,\Xi)) 
                                                   + \varphi(p_{n_*}(Z,Y,\Xi,...\Xi))$$
for $\Xi \in \left\{P_1, P_2,I_{\A}\right\}$.

\end{claim}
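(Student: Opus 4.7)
The strategy is essentially a one-line observation applied carefully: the basic $*$-Lie product $[A,B]_{*}=AB-BA^{*}$ is additive in \emph{each} of its two arguments (from distributivity of the ring multiplication and of the involution over addition), and since $p_{n_{*}}$ is built by iterating $[\,\cdot\,,\,\cdot\,]_{*}$, a straightforward induction on $n$ shows that $p_{n_{*}}(x_{1},\ldots,x_{n})$ is additive in each slot $x_{i}$. I would record this as a preliminary remark before starting.

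With multilinearity in hand, the plan is to apply hypothesis $(\bullet)$, substitute the identity $\varphi(H)=\varphi(X)+\varphi(Y)$, split by additivity inside $\A'$, and then apply $(\bullet)$ in reverse. Concretely, for the first equation:
\begin{align*}
\varphi(p_{n_{*}}(H,Z,\Xi,\ldots,\Xi))
&= p_{n_{*}}(\varphi(H),\varphi(Z),\varphi(\Xi),\ldots,\varphi(\Xi)) \\
&= p_{n_{*}}(\varphi(X)+\varphi(Y),\varphi(Z),\varphi(\Xi),\ldots,\varphi(\Xi)) \\
&= p_{n_{*}}(\varphi(X),\varphi(Z),\varphi(\Xi),\ldots,\varphi(\Xi)) + p_{n_{*}}(\varphi(Y),\varphi(Z),\varphi(\Xi),\ldots,\varphi(\Xi)) \\
&= \varphi(p_{n_{*}}(X,Z,\Xi,\ldots,\Xi)) + \varphi(p_{n_{*}}(Y,Z,\Xi,\ldots,\Xi)),
\end{align*}
where the first and last equalities use $(\bullet)$ and the third uses additivity of $p_{n_{*}}$ in its first slot.

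For the second equation, the argument is literally the same, except that after applying $(\bullet)$ and substituting $\varphi(H)=\varphi(X)+\varphi(Y)$, I invoke additivity of $p_{n_{*}}$ in its \emph{second} slot (which is why the preliminary remark was made for every slot, not just the first). Since the identity $(\bullet)$ is assumed for all $A,B\in\A$ with $\Xi\in\{P_{1},P_{2},I_{\A}\}$, every application of $(\bullet)$ in the computation is legitimate.

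There is no real obstacle here; the only point that needs mention is the routine verification that each step $p_{k_{*}}\mapsto p_{(k+1)_{*}}=[p_{k_{*}},\,\cdot\,]_{*}$ preserves additivity in each of the existing slots, which is the content of the inductive step. Note also that hypothesis $(\spadesuit)$ and the symmetry of the $P_{j}$ are not needed in this claim; they will only become relevant in subsequent lemmas where one must actually produce elements $H$ realizing $\varphi(H)=\varphi(X)+\varphi(Y)$.
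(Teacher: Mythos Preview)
Your proof is correct and follows essentially the same approach as the paper: apply $(\bullet)$, substitute $\varphi(H)=\varphi(X)+\varphi(Y)$, split using additivity of $p_{n_{*}}$ in the relevant slot, and apply $(\bullet)$ again. The only difference is that you spell out the inductive justification for multilinearity of $p_{n_{*}}$, whereas the paper simply invokes it.
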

\begin{proof}
Using the definition of $\varphi$ and multilinearity of $p_{n_*}$ we obtain
$$
\begin{aligned}
\varphi(p_{n_*}(H,Z,\Xi,...,\Xi)) &= p_{n_*}(\varphi(H),\varphi(Z),\varphi(\Xi),...,\varphi(\Xi)) \\
                             &= p_{n_*}(\varphi(X)+\varphi(Y),\varphi(Z),\varphi(\Xi),...,\varphi(\Xi)) \\
                             &= p_{n_*}(\varphi(X),\varphi(Z),\varphi(\Xi),...,\varphi(\Xi)) \\
                             &+ p_{n_*}(\varphi(Y),\varphi(Z),\varphi(\Xi),...,\varphi(\Xi)) \\
                             &= \varphi(p_{n_*}(X,Z,\Xi,...,\Xi)) \\
                             &+ \varphi(p_{n_*}(X,Z,\Xi,...,\Xi)).
\end{aligned}
$$
In a similar way we have

$
\varphi(p_{n_*}(Z,H,\Xi,...,\Xi)) = \varphi(p_{n_*}(Z,X,\Xi,...,\Xi)) 
                                                   + \varphi(p_{n_*}(Z,Y,\Xi,...,\Xi)).
$
\end{proof}

\begin{claim}\label{claim3}  $\varphi(0) = 0$.
\end{claim}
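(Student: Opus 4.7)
The plan is to exploit bijectivity together with the fact that the polynomial $p_{n_*}$ vanishes whenever one of its first two arguments is $0$. Concretely, since $\varphi$ is bijective, there exists a unique element $T \in \A$ with $\varphi(T) = 0$, and it suffices to prove $T = 0$.

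First I would apply the defining identity $(\bullet)$ with $\Xi = I_{\A}$, second slot equal to $T$, and first slot equal to an arbitrary $A \in \A$. Using $\varphi(T) = 0$ and unitality ($\varphi(I_\A) = I_{\A'}$), the right-hand side becomes $p_{n_*}(\varphi(A),\,0,\,I_{\A'},\ldots,I_{\A'})$. Since $p_{2_*}(\varphi(A),0) = \varphi(A)\cdot 0 - 0\cdot\varphi(A)^* = 0$, an easy induction on the recursive definition of $p_{n_*}$ shows this vanishes for all $n \geq 2$. Hence
\[
\varphi\bigl(p_{n_*}(A, T, I_{\A},\ldots,I_{\A})\bigr) = 0 = \varphi(T)
\]
for every $A \in \A$. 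By injectivity of $\varphi$ this forces $p_{n_*}(A, T, I_{\A},\ldots,I_{\A}) = T$ for all $A$.

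Now I would specialise to $A = 0$. Exactly the same computation as before (this time on the $\A$ side) yields $p_{n_*}(0, T, I_{\A},\ldots,I_{\A}) = 0$, because $p_{2_*}(0,T) = 0$ and iterated $*$-Lie brackets with $0$ stay at $0$. Combining the two conclusions gives $T = 0$, so $\varphi(0) = \varphi(T) = 0$.

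There is no serious obstacle here; the only thing to be careful about is that condition $(\bullet)$ is imposed only for $\Xi \in \{P_1, P_2, I_{\A}\}$, so I must stick to one of those three choices in the plugged-in identity (the choice $\Xi = I_{\A}$ is the cleanest, but $\Xi = P_1$ or $P_2$ works equally well because the argument only uses that the second slot is $0$). Bijectivity is essential: without it, one only learns that $\varphi$ annihilates a family of elements, not that $T$ itself is $0$.
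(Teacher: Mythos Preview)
Your argument is correct and rests on the same core observation as the paper: pick a preimage of $0$ under $\varphi$ and use that $p_{n_*}$ vanishes whenever one of its first two entries is $0$. The paper's version is simply the shortcut of your own proof: taking $A=0$ from the very start (your ``specialisation'') already gives
\[
\varphi(0)=\varphi\bigl(p_{n_*}(0,T,I_\A,\ldots,I_\A)\bigr)=p_{n_*}\bigl(\varphi(0),0,I_{\A'},\ldots,I_{\A'}\bigr)=0,
\]
so the intermediate step $p_{n_*}(A,T,I_\A,\ldots,I_\A)=T$ for all $A$ and the appeal to injectivity are unnecessary---surjectivity alone suffices.
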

\begin{proof}
Since $\varphi$ is surjective, there exists $X \in \A$ such that $\varphi(X) = 0$. Then,
$\varphi(0) = \varphi(p_{n_*}(0,X,I_{\A},...,I_{\A})) = p_{n_*}(\varphi(0),\varphi(X),I_{\A'},...,I_{\A'}) = 0.$
\end{proof}

We follow with a sequence of lemmas that show the additivity of $\varphi$.
\begin{lemma}\label{lema1} For any $A_{11} \in \A_{11}$ and $B_{22} \in \A_{22}$, we have 
$$\varphi(A_{11} + B_{22}) = \varphi(A_{11}) + \varphi(B_{22}).$$
\end{lemma}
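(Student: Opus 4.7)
The plan is to use the surjectivity of $\varphi$ to pick $T\in\A$ with $\varphi(T) = \varphi(A_{11}) + \varphi(B_{22})$ and then prove $T = A_{11}+B_{22}$ by extracting the Peirce components $T_{jk}\in\A_{jk}$ of the decomposition $T=T_{11}+T_{12}+T_{21}+T_{22}$ one at a time. The tool throughout is Claim~\ref{claim2}, and the recurring trick is to choose $Z\in\A$ and $\Xi\in\{P_1,P_2,I_{\A}\}$ so that at least one of the two summands on the right-hand side of the claim vanishes; then $\varphi(0)=0$ together with injectivity of $\varphi$ promotes the resulting identity to one inside $\A$ itself.

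First I would apply Claim~\ref{claim2} with $Z=P_2$ and $\Xi=I_{\A}$. One checks directly that $p_{2_*}(A_{11},P_2)=0$, that $p_{2_*}(T,P_2) = TP_2-P_2T^*$ is skew-Hermitian, and that $p_{2_*}(B_{22},P_2) = B_{22}-B_{22}^*$ is skew-Hermitian. Since the bracket of a skew-Hermitian element with $I_{\A}$ simply doubles it, iteration gives $p_{n_*}(T,P_2,I_{\A},\ldots,I_{\A}) = 2^{n-2}(TP_2-P_2T^*)$ and an analogous formula for $B_{22}$. Injectivity of $\varphi$ then forces $TP_2-P_2T^* = B_{22}-B_{22}^*$, and comparing Peirce components yields $T_{12}=0$ (the $\A_{22}$-part simultaneously giving $T_{22}-B_{22}=(T_{22}-B_{22})^*$, which will not actually be needed). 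The symmetric choice $Z=P_1$ gives $T_{21}=0$, so $T = T_{11}+T_{22}$.

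Next I would apply Claim~\ref{claim2} in the other order. Take $Z = P_1XP_2\in\A_{12}$ for arbitrary $X\in\A$ and $\Xi=P_2$. Since $p_{2_*}(Z,A_{11})=0$ the $A_{11}$-summand vanishes, and using $T_{12}=T_{21}=0$ the computation of $p_{n_*}(Z,T,P_2,\ldots,P_2)$ collapses to an element of $\A_{12}\oplus\A_{21}$ whose $\A_{12}$-part equals $ZT_{22}$ for every $n\ge 2$. The analogous expression for $B_{22}$ has $\A_{12}$-part $ZB_{22}$, so injectivity of $\varphi$ and projection onto $\A_{12}$ give $Z(T_{22}-B_{22})=0$, that is, $P_1X(T_{22}-B_{22})=0$ for every $X\in\A$. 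Hypothesis $(\spadesuit)$ with $j=1$ then forces $T_{22}=B_{22}$. The analogous argument with $Z\in\A_{21}$ and $\Xi=P_1$ uses $(\spadesuit)$ with $j=2$ to give $T_{11}=A_{11}$, and hence $T=A_{11}+B_{22}$.

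The main obstacle is to control the iterated bracket $p_{n_*}$ uniformly in $n$, since the hypothesis $(\bullet)$ is for whichever fixed value of $n$ the theorem is applied to. This is overcome by the observation that the sequence of iterates either doubles at each step (when the running element is skew-Hermitian and $\Xi=I_{\A}$) or stabilizes after one bracket (when $\Xi\in\{P_1,P_2\}$); in either case the calculation for general $n$ differs from the case $n=2$ or $n=3$ only by a harmless scalar factor $2^{n-2}$, or not at all.
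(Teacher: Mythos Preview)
Your proof is correct and follows essentially the same strategy as the paper: pull back $\varphi(A_{11})+\varphi(B_{22})$ to some $T$ via surjectivity, then pin down the Peirce components of $T$ one by one using Claim~\ref{claim2}, injectivity, and hypothesis~$(\spadesuit)$. The only cosmetic difference is in the particular test data: the paper places the unknown in the \emph{second} slot and uses $\Xi=P_j$ throughout (e.g.\ $p_{n_*}(P_1,H,P_1,\ldots,P_1)$ and $p_{n_*}(D_{21},H,P_1,\ldots,P_1)$), so the iterates stabilize immediately and no powers of~$2$ appear, whereas you put $T$ in the first slot with $\Xi=I_{\A}$ for the off-diagonal step and then switch to $\Xi=P_2$ for the diagonal step; both routes land on the same conclusions.
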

\begin{proof}
Since $\varphi$ is surjective, given $\varphi(A_{11})+\varphi(B_{22}) \in \A'$ there exists $H \in \A$ such that 
$\varphi(H) = \varphi(A_{11})+\varphi(B_{22})$, with $H=H_{11}+H_{12}+H_{21}+H_{22}$. Besides, by claims \ref{claim2} and \ref{claim3}
$$
\varphi(p_{n_*}(P_1,H,P_1,...,P_1)) = \varphi(p_{n_*}(P_1,A_{11},P_1,...,P_1)) + \varphi(p_{n_*}(P_1,B_{22},P_1,...,P_1)), 
$$
that is, 
$$\varphi(-H_{21} + H_{21}^*)= \varphi(0) + \varphi(0) = 0.$$
Then, by injectivity of $\varphi$, $-H_{21} + H_{21}^* = 0$. Thus $H_{21} = 0$. Moreover, 
$$
\varphi(p_{n_*}(P_2,H,P_2,...,P_2)) = \varphi(p_{n_*}(P_2,A_{11},P_2,...,P_2)) + \varphi(p_{n_*}(P_2,B_{22},P_2,...,P_2)), 
$$
that is,
$$\varphi(-H_{12} + H_{12}^*)= 0.$$
Again, by injectivity of $\varphi$ we conclude that $H_{12} = 0$.

Furthermore, given $D_{21}\in \A_{21}$,
$$ 
\begin{aligned}
\varphi(p_{n_*}(D_{21},H,P_1,...,P_1)) &=\varphi(p_{n_*}(D_{21},A_{11},P_1,...,P_1)) \\
                                       &+\varphi(p_{n_*}(D_{21},B_{22},P_1,...,P_1)), 
\end{aligned}																			
$$
that is,
$$
\varphi(D_{21}H_{11} - (D_{21}H_{11})^*) = \varphi(D_{21}A_{11} - (D_{21}A_{11})^*).
$$
Then we conclude, by injectivity of $\varphi$, that $D_{21}H_{11} - (D_{21}H_{11})^* = D_{21}A_{11} - (D_{21}A_{11})^*$, that is, 
$D_{21}(H_{11} - A_{11}) = 0$. Even more, $P_2\A(H_{11} - A_{11}) = 0$, which implies that $H_{11} = A_{11}$ by $\left(\spadesuit\right)$.

Finally, given $D_{12}\in \A_{12}$, a similar calculation gives us $H_{22} = B_{22}$. Therefore $H = A_{11} + B_{22}$.
\end{proof}

\begin{lemma}\label{lema2}
For any $A_{12} \in \A_{12}$ and $B_{21} \in \A_{21}$, we have $\varphi(A_{12} + B_{21}) = \varphi(A_{12}) + \varphi(B_{21})$.
\end{lemma}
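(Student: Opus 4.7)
\medskip
\noindent\textbf{Proof plan.} The strategy mirrors Lemma \ref{lema1}. By surjectivity of $\varphi$, pick $H\in\A$ with $\varphi(H)=\varphi(A_{12})+\varphi(B_{21})$ and decompose $H=H_{11}+H_{12}+H_{21}+H_{22}$. The goal is to show $H_{11}=H_{22}=0$, $H_{12}=A_{12}$ and $H_{21}=B_{21}$. All identifications follow from Claim \ref{claim2} plus an injectivity argument, so the work is really in choosing the right test products and carrying out the block computation.

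\medskip
\noindent\emph{Phase 1: the off-diagonal blocks.} I first pin down $H_{12}$ and $H_{21}$ by testing against $p_{n_*}(P_1,\cdot,P_2,\ldots,P_2)$ and $p_{n_*}(P_2,\cdot,P_1,\ldots,P_1)$. A direct computation, using that the initial bracket $[P_i,X]_*$ only keeps the off-diagonal part of $X$ and that subsequent brackets with $P_i$ stabilize, shows that for any $X=\sum X_{ij}$ and $n\geq 3$,
\[
p_{n_*}(P_1,X,P_2,\ldots,P_2)=X_{12}-X_{12}^*, \qquad p_{n_*}(P_2,X,P_1,\ldots,P_1)=X_{21}-X_{21}^*.
\]
Plugging $X=H$, $X=A_{12}$, $X=B_{21}$ into Claim \ref{claim2} and noting that the ``cross'' terms vanish (for instance $p_{n_*}(P_1,B_{21},P_2,\ldots,P_2)=0$ and $p_{n_*}(P_2,A_{12},P_1,\ldots,P_1)=0$), injectivity of $\varphi$ yields $H_{12}-H_{12}^*=A_{12}-A_{12}^*$ and $H_{21}-H_{21}^*=B_{21}-B_{21}^*$. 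Since $H_{12}-A_{12}\in\A_{12}$ while its adjoint lies in $\A_{21}$ (Claim \ref{claim1}), and $\A_{12}\cap\A_{21}=\{0\}$, I conclude $H_{12}=A_{12}$ and similarly $H_{21}=B_{21}$.

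\medskip
\noindent\emph{Phase 2: the diagonal blocks via $(\spadesuit)$.} To force $H_{11}=0$, I test against $p_{n_*}(D_{21},\cdot,P_1,\ldots,P_1)$ for an arbitrary $D_{21}\in\A_{21}$. A block calculation (analogous to the one used in Lemma \ref{lema1} for the $D_{21}A_{11}$ term) gives
\[
p_{n_*}(D_{21},H,P_1,\ldots,P_1)=D_{21}H_{11}-(D_{21}H_{11})^*,
\]
while the same product evaluated at $A_{12}$ and at $B_{21}$ vanishes (because neither has a $(1,1)$-component to feed into the iterated bracket). Claim \ref{claim2} and injectivity then give $D_{21}H_{11}=(D_{21}H_{11})^*$, and the same ``off-diagonal self-adjoint'' argument as in Phase 1 yields $D_{21}H_{11}=0$. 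Since $D_{21}\in\A_{21}$ was arbitrary, $P_2\A H_{11}=\A_{21}H_{11}=0$, and hypothesis $(\spadesuit)$ delivers $H_{11}=0$. The mirror argument with $D_{12}\in\A_{12}$ and $p_{n_*}(D_{12},\cdot,P_2,\ldots,P_2)$ gives $H_{22}=0$, completing the proof.

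\medskip
\noindent\emph{Main obstacle.} The conceptual work is minimal; the real care goes into the block bookkeeping. I expect the main place where one could slip up is verifying that, after the very first bracket, further brackets with $P_1$ (resp.\ $P_2$) leave the expression unchanged, so that the formulas for $p_{n_*}$ do not degenerate as $n$ grows. Provided this stabilization is done once, the argument for $n\geq 3$ reduces to the computations above; if $n=2$ one uses instead $p_{2_*}(P_i,\cdot)$ directly, which only involves $[P_i,\cdot]_*$ and is strictly simpler.
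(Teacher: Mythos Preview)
Your proof is correct and follows essentially the same strategy as the paper's: choose $H$ with $\varphi(H)=\varphi(A_{12})+\varphi(B_{21})$, then use Claim~\ref{claim2} with suitable test products to identify each block of $H$. The only cosmetic difference is in Phase~1: the paper tests with $p_{n_*}(P_1,\,\cdot\,,P_1,\ldots,P_1)$ and $p_{n_*}(P_2,\,\cdot\,,P_2,\ldots,P_2)$ (obtaining $-H_{21}+H_{21}^*$ and $-H_{12}+H_{12}^*$ respectively), whereas you mix the projections and use $p_{n_*}(P_1,\,\cdot\,,P_2,\ldots,P_2)$ and $p_{n_*}(P_2,\,\cdot\,,P_1,\ldots,P_1)$; both choices isolate the same off-diagonal information and Phase~2 is identical to the paper's.
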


\begin{proof}
Since $\varphi$ is surjective, given $\varphi(A_{12})+\varphi(B_{21}) \in \A'$ there exists $H \in \A$ such that 
$\varphi(H) = \varphi(A_{12})+\varphi(B_{21})$, with $H=H_{11}+H_{12}+H_{21}+H_{22}$. Now, by Claims \ref{claim2} and \ref{claim3}
$$
\varphi(p_{n_*}(P_1,H,P_1,...,P_1)) = \varphi(p_{n_*}(P_1,A_{12},P_1,...,P_1)) + \varphi(p_{n_*}(P_1,B_{21},P_1,...,P_1)),
$$
that is,
$$
\varphi(-H_{21} + H_{21}^*) = \varphi(-B_{21} + B_{21}^*).
$$
Then, by injectivity of $\varphi$, $-H_{21} + H_{21}^* = -B_{21} + B_{21}^*$. Thus $H_{21} = B_{21}$. Moreover,
$$
\varphi(p_{n_*}(P_2,H,P_2,...,P_2)) = \varphi(p_{n_*}(P_2,A_{12},P_2,...,P_2)) + \varphi(p_{n_*}(P_2,B_{21},P_2,...,P_2)),$$
that is,
$$
\varphi(-H_{12} + H_{12}^*) = \varphi(-A_{12} + A_{12}^*).
$$
Again, by injectivity of $\varphi$ we conclude that $H_{12} = A_{12}$.

Furthermore, given $D_{21}\in \A_{21}$,
$$ 
\begin{aligned}
\varphi(D_{21}H_{11} - (D_{21}H_{11})^*) &= \varphi(p_{n_*}(D_{21},H,P_1,...,P_1)) \\
                                         &= \varphi(p_{n_*}(D_{21},A_{12},P_1,...,P_1)) \\
																				 &+ \varphi(p_{n_*}(D_{21},B_{21},P_1,...,P_1)) = 0.
\end{aligned}
$$
Then we conclude, by injectivity of $\varphi$, that $D_{21}H_{11} - (D_{21}H_{11})^* = 0$, that is, $D_{21}H_{11} = 0$. Even more, $P_2\A H_{11} = 0$, 
which implies that $H_{11} = 0$ by $\left(\spadesuit\right)$.

Finally, given $D_{12}\in \A_{12}$, a similar calculation gives us $H_{22} = 0$. Therefore, we conclude that $H = A_{12} + B_{21}$.
\end{proof}

\begin{lemma}\label{lema3}
For any $A_{11} \in \A_{11}$, $B_{12} \in \A_{12}$, $C_{21} \in \A_{21}$ and $D_{22} \in \A_{22}$ we have 
$$\varphi(A_{11} + B_{12} + C_{21} + D_{22}) = \varphi(A_{11}) + \varphi(B_{12}) + \varphi(C_{21}) + \varphi(D_{22}).$$
\end{lemma}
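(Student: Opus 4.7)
The strategy mirrors the proofs of Lemmas \ref{lema1} and \ref{lema2}. By surjectivity, pick $H = H_{11} + H_{12} + H_{21} + H_{22} \in \A$ with $\varphi(H) = \varphi(A_{11}) + \varphi(B_{12}) + \varphi(C_{21}) + \varphi(D_{22})$. The goal is to identify each $H_{ij}$ with the corresponding $\A_{ij}$-component of $A_{11} + B_{12} + C_{21} + D_{22}$, and the main technical obstacle is that Claim \ref{claim2} only splits a \emph{two}-term sum; to handle four terms we must first repackage the sum using the previous two lemmas.

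Concretely, first apply Lemma \ref{lema1} and Lemma \ref{lema2} to rewrite $\varphi(H) = \varphi(A_{11} + D_{22}) + \varphi(B_{12} + C_{21})$. Now evaluate $\varphi(p_{n_*}(P_1,H,P_1,\ldots,P_1))$. As observed in the proof of Lemma \ref{lema1}, $p_{n_*}(P_1,X,P_1,\ldots,P_1)$ depends only on the $\A_{21}$-component of $X$ and equals $-X_{21} + X_{21}^*$. Apply Claim \ref{claim2} (splitting the second slot) to the pair $\{A_{11} + D_{22},\ B_{12} + C_{21}\}$, then apply Claim \ref{claim2} once more inside each summand (now legitimate thanks to Lemmas \ref{lema1} and \ref{lema2}), and use Claim \ref{claim3} to discard the terms whose $\A_{21}$-component vanishes. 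Injectivity of $\varphi$ then yields $-H_{21} + H_{21}^* = -C_{21} + C_{21}^*$, and separating $\A_{21}$- and $\A_{12}$-parts forces $H_{21} = C_{21}$. Performing the symmetric calculation with $P_2$ in place of $P_1$ produces $H_{12} = B_{12}$.

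For the diagonal components, adapt the last step of Lemma \ref{lema1}. For arbitrary $D_{21} \in \A_{21}$, expand $\varphi(p_{n_*}(D_{21},H,P_1,\ldots,P_1))$ through the same two-stage application of Claim \ref{claim2}, noting that only the $A_{11}$ summand contributes (the others vanish by Claim \ref{claim3} because their $(1,1)$-components are zero). This gives $\varphi(D_{21}H_{11} - (D_{21}H_{11})^*) = \varphi(D_{21}A_{11} - (D_{21}A_{11})^*)$, hence by injectivity $D_{21}(H_{11} - A_{11}) = 0$; since this holds for every $D_{21} \in \A_{21} = P_2\A P_1$ and $H_{11} - A_{11} \in \A_{11}$, one has $P_2\A(H_{11}-A_{11}) = 0$ and $(\spadesuit)$ forces $H_{11} = A_{11}$. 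The analogous computation using $D_{12} \in \A_{12}$ and $P_2$ yields $H_{22} = D_{22}$.

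Combining the four equalities gives $H = A_{11} + B_{12} + C_{21} + D_{22}$, completing the proof. The only nontrivial aspect is the iterated use of Claim \ref{claim2}: it is crucial that Lemmas \ref{lema1} and \ref{lema2} are already available, since they provide precisely the rewrites needed to funnel the four-term sum through Claim \ref{claim2}'s two-argument hypothesis.
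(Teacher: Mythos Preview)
Your proposal is correct and follows the same route as the paper's proof: pick $H$ by surjectivity, use Lemmas \ref{lema1} and \ref{lema2} to repackage the four-term sum as $\varphi(A_{11}+D_{22})+\varphi(B_{12}+C_{21})$, then run the $P_1$/$P_2$ tests and the $D_{21}$/$D_{12}$ tests exactly as in Lemmas \ref{lema1} and \ref{lema2} to pin down each $H_{ij}$. The only cosmetic difference is that you invoke Claim~\ref{claim2} a second time (inside each packaged summand) whereas the paper simply computes $p_{n_*}(P_1,A_{11}+D_{22},P_1,\ldots,P_1)=0$ and $p_{n_*}(P_1,B_{12}+C_{21},P_1,\ldots,P_1)=-C_{21}+C_{21}^*$ directly from the formula $p_{n_*}(P_1,X,P_1,\ldots,P_1)=-X_{21}+X_{21}^*$; the outcome is identical.
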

\begin{proof}
Since $\varphi$ is surjective, given $\varphi(A_{11})+\varphi(B_{12})+\varphi(C_{21}) + \varphi(D_{22})\in \A'$ there exists $H \in \A$ such that 
$\varphi(H) = \varphi(A_{11})+\varphi(B_{12})+\varphi(C_{21}) + \varphi(D_{22})$, with $H=H_{11}+H_{12}+H_{21}+H_{22}$. Even more, by Lemmas \ref{lema1} and \ref{lema2}
$$
\varphi(H) = \varphi(A_{11})+\varphi(B_{12})+\varphi(C_{21}) + \varphi(D_{22}) = \varphi(A_{11} + D_{22})+\varphi(B_{12} + C_{21}).
$$
Now, observing that
$p_{n_*}(P_1,A_{11} + D_{22},...,P_1) = 0 = p_{n_*}(P_1,B_{12},P_1,...,P_1)$ and by Claims \ref{claim2} and \ref{claim3} we obtain
$$
\begin{aligned}
&\varphi(p_{n_*}(P_1,H,P_1,...,P_1)) \\
&= \varphi(p_{n_*}(P_1,A_{11} + D_{22},...,P_1)) + \varphi(p_{n_*}(P_1,B_{12} + C_{21},P_1,...,P_1)) \\
&= \varphi(p_{n_*}(P_1,C_{21},P_1,...,P_1)), 
\end{aligned}
$$
that is, 
$$
\varphi(-H_{21} + H_{21}^*) = \varphi(-C_{21} + C_{21}^*).
$$
Then, by injectivity of $\varphi$, $-H_{21} + H_{21}^* = -C_{21} + C_{21}^*$. Thus $H_{21} = C_{21}$.

In a similar way, using $P_2$ rather than $P_1$ in the previous calculation, we conclude that $H_{12} = B_{12}$. Also, given $X_{21} \in \A_{21}$,
$$
\begin{aligned}
&\varphi(p_{n_*}(X_{21},H,P_1,...,P_1)) \\
&= \varphi(p_{n_*}(X_{21},A_{11} + D_{22},...,P_1)) + \varphi(p_{n_*}(X_{21},B_{12} + C_{21},P_1,...,P_1)) \\
&= \varphi(p_{n_*}(X_{21},A_{11},P_1,...,P_1)), 
\end{aligned}
$$
since $p_{n_*}(X_{21},B_{12} + C_{21},P_1,...,P_1) = 0 = p_{n_*}(X_{21},D_{22},...,P_1)$. 
Again, by injectivity of $\varphi$ we conclude, by following the same strategy as in the proof of Lemma \ref{lema1}, that $H_{11} = A_{11}$. 
Now, using $P_2$ rather than $P_1$ and $X_{12}$ rather than $X_{21}$ in the previous calculation we obtain $H_{22} = D_{22}$. 
Therefore, $H = A_{11} + B_{12} + C_{21} + D_{22}$. 
\end{proof}

\begin{lemma}\label{lema4}
For all $A_{jk}, B_{jk} \in \A_{jk}$ we have $\varphi(A_{jk} + B_{jk}) = \varphi(A_{jk}) + \varphi(B_{jk})$ for $j \neq k$.
\end{lemma}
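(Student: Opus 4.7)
The plan is to adapt the template of Lemmas \ref{lema1}--\ref{lema3}: by surjectivity, pick $H\in\A$ with $\varphi(H)=\varphi(A_{jk})+\varphi(B_{jk})$, decompose $H=H_{11}+H_{12}+H_{21}+H_{22}$, and force $H=A_{jk}+B_{jk}$ componentwise; the claim will then follow by injectivity. Without loss of generality I take $j=1$, $k=2$ and write $A$, $B$ for $A_{12}$, $B_{12}$.

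First I would knock out the opposite off-diagonal: applying Claim \ref{claim2} with the test $p_{n_*}(P_1,\cdot,P_1,\ldots,P_1)$ to $H$, $A$ and $B$ gives $\varphi(-H_{21}+H_{21}^*)=\varphi(0)+\varphi(0)=0$ (since $A,B\in\A_{12}$ have trivial $(2,1)$-part), hence $H_{21}=0$ by injectivity. Next I would kill the diagonal blocks exactly as in Lemma \ref{lema1}: for $D_{21}\in\A_{21}$ the iterated bracket $p_{n_*}(D_{21},\cdot,P_1,\ldots,P_1)$ annihilates both $A$ and $B$ (since $D_{21}A$, $D_{21}B\in\A_{22}$ and the subsequent commutator with $P_1$ vanishes), so Claim \ref{claim2} forces $\varphi(D_{21}H_{11}-(D_{21}H_{11})^*)=0$; injectivity gives $D_{21}H_{11}=0$ for every $D_{21}$, i.e.\ $P_2\A H_{11}=0$, and hypothesis $(\spadesuit)$ yields $H_{11}=0$. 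The symmetric test with $D_{12}\in\A_{12}$ and $\Xi=P_2$ yields $H_{22}=0$, leaving $H=H_{12}\in\A_{12}$.

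The heart of the proof is the identification $H_{12}=A+B$. The device I plan to use is to compute $p_{n_*}(P_1+A,\,P_2+B,\,\Xi,\ldots,\Xi)$ in two distinct ways. Directly: since $(P_1+A)(P_2+B)=A+B$ (using $P_1P_2=AB=0$, $P_1B=B$, $AP_2=A$), the iterated bracket is an explicit element of $\A$ whose Peirce decomposition has $A+B$ as its $\A_{12}$-summand, so Lemma \ref{lema3} produces $\varphi(A+B)$ as one of four constituents. On the other side, Lemma \ref{lema3} gives $\varphi(P_1+A)=\varphi(P_1)+\varphi(A)$ and $\varphi(P_2+B)=\varphi(P_2)+\varphi(B)$; combining the multilinearity of $p_{n_*}$ in its first two arguments with the defining identity $(\bullet)$ expands the same $\varphi$-image into a sum of four atomic images $\varphi(p_{n_*}(\cdot,\cdot,\Xi,\ldots,\Xi))$, each computable and (via Lemma \ref{lema2}) splitting into $\varphi(A)+\varphi(B)$ plus terms in $\A_{11}$ and $\A_{21}$. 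Equating and cancelling the corner side-terms that appear identically on both sides yields $\varphi(A+B)=\varphi(A)+\varphi(B)$, and injectivity of $\varphi$ closes the argument.

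The step I expect to be the main obstacle is this final cancellation. For $n=2$ the scheme works cleanly because the $\A_{11}$ byproduct $-BA^*$ appears with equal multiplicity on both sides and the $\A_{21}$-piece $-A^*$ is a single summand admitting the Lemma \ref{lema2} splitting. For $n\geq 3$, however, iterating with $\Xi$ tends to collapse the useful $\A_{11}$-anchor to zero, and the $\A_{21}$-side-terms appear combined as $-(A+B)^*=-A^*-B^*$ on the direct side but split as $-A^*,-B^*$ on the multilinearity side, which couples the identity we want to the analogous statement in $\A_{21}$. Resolving this coupling will require either a judicious choice of $\Xi\in\{P_1,P_2,I_{\A}\}$ that separates the two off-diagonals, or an auxiliary test using Claim \ref{claim2} with an off-diagonal element in the second slot so that the symmetric argument in $\A_{12}$ and $\A_{21}$ can be run simultaneously.
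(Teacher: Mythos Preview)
Your plan tracks the paper's proof closely: surjectivity to produce $H$, the tests $p_{n_*}(P_1,\cdot,P_1,\ldots,P_1)$, $p_{n_*}(D_{12},\cdot,P_2,\ldots,P_2)$, $p_{n_*}(D_{21},\cdot,P_1,\ldots,P_1)$ to force $H=H_{12}\in\A_{12}$, and finally the expansion of $p_{n_*}(P_1+A_{12},\,P_2+B_{12},\,P_2,\ldots,P_2)$ via Lemma~\ref{lema3}. You also diagnose the obstruction correctly: with $\Xi=P_2$ the direct side is $A_{12}+B_{12}-A_{12}^*-B_{12}^*$, so Lemma~\ref{lema2} only gives $\varphi(A_{12}+B_{12})+\varphi(-A_{12}^*-B_{12}^*)$, while the expanded side yields $\varphi(A_{12})+\varphi(B_{12})+\varphi(-A_{12}^*)+\varphi(-B_{12}^*)$, coupling the $\A_{12}$ identity to its $\A_{21}$ twin.

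The gap is that you leave this coupling unresolved, and the resolution is simpler than either route you sketch. The paper does run the two off-diagonal cases simultaneously, but not by devising a new test: alongside $H$, it introduces a second surjectivity preimage $T\in\A$ with $\varphi(T)=\varphi(-A_{12}^*)+\varphi(-B_{12}^*)$ and shows $T=T_{21}\in\A_{21}$ by the same corner-killing computations. The right-hand side of the coupled equation then reads $\varphi(H_{12})+\varphi(T_{21})$, which Lemma~\ref{lema2} collapses to $\varphi(H_{12}+T_{21})$. Injectivity gives $A_{12}+B_{12}-A_{12}^*-B_{12}^*=H_{12}+T_{21}$, and the Peirce decomposition separates the components, yielding $H_{12}=A_{12}+B_{12}$. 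No special choice of $\Xi$ is required, and the ``auxiliary'' ingredient is nothing more than a second invocation of surjectivity followed by one more use of Lemma~\ref{lema2}.
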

\begin{proof}
We shall prove the case $j=1$ and $k = 2$, since the other case is done in a similar way. Since $\varphi$ is surjective, given 
$\varphi(A_{12}) + \varphi(B_{12})\in \A'$ and $\varphi(-A_{12}^*) + \varphi(-B_{12}^*)$ there exist $H \in \A$ and $T \in \A$ such that 
$\varphi(H) = \varphi(A_{12}) + \varphi(B_{12})$ and $\varphi(T) = \varphi(-A_{12}^*) + \varphi(-B_{12}^*)$, with $H=H_{11}+H_{12}+H_{21}+H_{22}$
and $T = T_{11} + T_{12} + T_{21} + T_{22}$.

Firstly, we show that $H\in \A_{12}$: by Claim \ref{claim2}
$$
\begin{aligned}
\varphi(-H_{21} + H_{21}^*) &= \varphi(p_{n_*}(P_1,H,P_1,...,P_1)) \\
                            &= \varphi(p_{n_*}(P_1,A_{12},P_1,...,P_1)) + \varphi(p_{n_*}(P_1,B_{12},P_1,...,P_1)) = 0.
\end{aligned}
$$
Then, by injectivity of $\varphi$ we obtain $H_{21} = 0$. Also, given $D_{12}\in \A_{12}$,
$$
\begin{aligned}
\varphi(D_{12}H_{22} - (D_{12}H_{22})^*) &= \varphi(p_{n_*}(D_{12},H,P_2,...,P_2)) \\
                                         &= \varphi(p_{n_*}(D_{12},A_{12},P_2,...,P_2)) \\
																				 &+ \varphi(p_{n_*}(D_{12},B_{12},P_2,...,P_2)) = 0,
\end{aligned}
$$
that is, $D_{12}H_{22} = 0$, which implies that $H_{22} = 0$ by $\left(\spadesuit\right)$. Now, using $D_{21}\in \A_{21}$ rather than $D_{12}$ in the previous calculation, 
we conclude that $H_{11} = 0$. Therefore, $H = H_{12}\in \A_{12}$.

In a similar way, we obtain $T = T_{21}\in A_{21}$.

Finally, by Lemma \ref{lema3}

$$
\begin{aligned}
\varphi(A_{12} + B_{12} - A_{12}^* - B_{12}^*) &= \varphi(p_{n_*}(P_1 + A_{12},P_2 + B_{12},P_2,...,P_2)) \\
                                               &= p_{n_*}(\varphi(P_1 + A_{12}),\varphi(P_2 + B_{12}),\varphi(P_2),...,\varphi(P_2)) \\
																							 &= p_{n_*}(\varphi(P_1),\varphi(P_2),\varphi(P_2),...,\varphi(P_2)) \\
																							 &+ p_{n_*}(\varphi(P_1),\varphi(B_{12}),\varphi(P_2),...,\varphi(P_2)) \\
																							 &+ p_{n_*}(\varphi(A_{12}),\varphi(P_2),\varphi(P_2),...,\varphi(P_2)) \\
																							 &+ p_{n_*}(\varphi(A_{12}),\varphi(B_{12}),\varphi(P_2),...,\varphi(P_2)) \\
																							 &= \varphi(p_{n_*}(P_1,P_2,P_2,...,P_2)) \\
																							 &+ \varphi(p_{n_*}(P_1,B_{12},P_2,...,P_2)) \\
																							 &+ \varphi(p_{n_*}(A_{12},P_2,P_2,...,P_2)) \\
																							 &+ \varphi(p_{n_*}(A_{12},B_{12},P_2,...,P_2)) \\
																							 &= \varphi(A_{12} - A_{12}^*) + \varphi(B_{12} - B_{12}^*) \\
																							 &= \varphi(A_{12}) + \varphi(B_{12}) + \varphi(-A_{12}^*) + \varphi(-B_{12}^*) \\
																							 &= \varphi(H_{12}) + \varphi(T_{21}) = \varphi(H_{12} + T_{21}).																						
\end{aligned}
$$

Since $\varphi$ is injective, we have $A_{12} + B_{12} - A_{12}^* - B_{12}^* = H_{12} + T_{21}$, i.e., $H = H_{12} = A_{12} + B_{12}$.
\end{proof}
\begin{lemma}\label{lema5}
For all $A_{jj}, B_{jj} \in \A_{jj}$, we have $\varphi(A_{jj} + B_{jj}) = \varphi(A_{jj}) + \varphi(B_{jj})$ for $j \in \left\{1,2\right\}.$
\end{lemma}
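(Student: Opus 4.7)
The plan is to reuse the strategy of Lemma~\ref{lema1} verbatim, handling the case $j=1$ (the case $j=2$ is identical after swapping $P_1\leftrightarrow P_2$). By surjectivity of $\varphi$, pick $H\in\A$ with $\varphi(H)=\varphi(A_{11})+\varphi(B_{11})$ and write $H=H_{11}+H_{12}+H_{21}+H_{22}$. The goal is to show $H_{12}=H_{21}=H_{22}=0$ and $H_{11}=A_{11}+B_{11}$, which gives $H=A_{11}+B_{11}$ and hence the lemma.

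First, feed $p_{n_*}(P_1,\cdot,P_1,\ldots,P_1)$ into Claim~\ref{claim2}: since $p_{n_*}(P_1,A_{11},P_1,\ldots,P_1)=0=p_{n_*}(P_1,B_{11},P_1,\ldots,P_1)$, one obtains $\varphi(-H_{21}+H_{21}^*)=0$, and injectivity of $\varphi$ together with Claim~\ref{claim1} kills $H_{21}$; the analogous computation with $P_2$ kills $H_{12}$. To eliminate $H_{22}$, I would test with any $D_{12}\in\A_{12}$ in $p_{n_*}(D_{12},\cdot,P_2,\ldots,P_2)$: both right-hand summands vanish because $D_{12}A_{11}=A_{11}D_{12}^*=D_{12}B_{11}=B_{11}D_{12}^*=0$, so Claim~\ref{claim2} and injectivity give $D_{12}H_{22}-(D_{12}H_{22})^*=0$, whence $D_{12}H_{22}=0$ (the $\A_{12}$ and $\A_{21}$ parts must vanish separately by Claim~\ref{claim1}); ranging $D_{12}$ over $\A_{12}$ produces $P_1\A H_{22}=\{0\}$ and $(\spadesuit)$ forces $H_{22}=0$. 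Finally, testing with $D_{21}\in\A_{21}$ in $p_{n_*}(D_{21},\cdot,P_1,\ldots,P_1)$ and invoking Lemma~\ref{lema3} to combine $\varphi(D_{21}A_{11}-(D_{21}A_{11})^*)+\varphi(D_{21}B_{11}-(D_{21}B_{11})^*)$ on the right-hand side, injectivity yields $D_{21}(H_{11}-A_{11}-B_{11})=0$ for every $D_{21}\in\A_{21}$; hence $P_2\A(H_{11}-A_{11}-B_{11})=\{0\}$ and a final appeal to $(\spadesuit)$ gives $H_{11}=A_{11}+B_{11}$.

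I do not anticipate a serious obstacle, since the entire argument is a word-for-word adaptation of Lemma~\ref{lema1}: the only genuine novelty is that the target $A_{11}+B_{11}$ has a trivial $\A_{22}$-component, so $H_{22}$ has to be annihilated outright via $\A_{12}$-test elements and $(\spadesuit)$, rather than matched to a prescribed nonzero element as in Lemma~\ref{lema1}. The one bookkeeping item is the Peirce identity $p_{n_*}(D_{12},H,P_2,\ldots,P_2)=D_{12}H_{22}-(D_{12}H_{22})^*$, which is the $1$-$2$ mirror of the identity used tacitly in Lemmas~\ref{lema1}--\ref{lema4} and follows from the same direct induction on $n$.
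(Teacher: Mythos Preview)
Your approach is exactly the paper's, step for step: kill $H_{21}$ and $H_{12}$ with $p_{n_*}(P_j,\,\cdot\,,P_j,\ldots,P_j)$, kill $H_{22}$ with $p_{n_*}(D_{12},\,\cdot\,,P_2,\ldots,P_2)$ and $(\spadesuit)$, then identify $H_{11}$ with $p_{n_*}(D_{21},\,\cdot\,,P_1,\ldots,P_1)$ and $(\spadesuit)$.

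One small correction in the last step. You write that Lemma~\ref{lema3} alone lets you combine
\[
\varphi\bigl(D_{21}A_{11}-(D_{21}A_{11})^*\bigr)+\varphi\bigl(D_{21}B_{11}-(D_{21}B_{11})^*\bigr)
\]
into a single $\varphi$-value, but each summand lives in $\A_{21}\oplus\A_{12}$, and Lemma~\ref{lema3} only gives additivity \emph{across} distinct Peirce blocks, not \emph{within} one. To merge the two $\A_{21}$-parts (and the two $\A_{12}$-parts) you must also invoke Lemma~\ref{lema4}; the paper does precisely this, citing Lemmas~\ref{lema3} and~\ref{lema4} together. This is also why the argument is not quite ``a word-for-word adaptation of Lemma~\ref{lema1}'': there the right-hand side had a single term, so no recombination was needed. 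With that citation fixed, your proof is complete and matches the paper.
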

\begin{proof}
We shall prove the case $j=1$, since the other case is done in a similar way. Since $\varphi$ is surjective, given 
$\varphi(A_{11}) + \varphi(B_{11})\in \A'$ there exists $H \in \A$ such that 
$\varphi(H) = \varphi(A_{11}) + \varphi(B_{11})$, with $H=H_{11}+H_{12}+H_{21}+H_{22}$. Now, by claim \ref{claim2}
$$
\begin{aligned}
\varphi(-H_{21} + H_{21}^*) &= \varphi(p_{n_*}(P_1,H,P_1,...,P_1)) \\
                            &= \varphi(p_{n_*}(P_1,A_{11},P_1,...,P_1)) + \varphi(p_{n_*}(P_1,B_{11},P_1,...,P_1)) = 0.
\end{aligned}
$$
Then, by injectivity of $\varphi$ we obtain $H_{21} = 0$. Also,
$$
\begin{aligned}
\varphi(-H_{12} + H_{12}^*) &= \varphi(p_{n_*}(P_2,H,P_2,...,P_2)) \\
                            &= \varphi(p_{n_*}(P_2,A_{11},P_2,...,P_2)) + \varphi(p_{n_*}(P_2,B_{11},P_2,...,P_2)) = 0,
\end{aligned}
$$
that is, $H_{12} = 0$ by injectivity of $\varphi$.  Moreover, given $D_{12}\in \A_{12}$,
$$
\begin{aligned}
\varphi(D_{12}H_{22} - (D_{12}H_{22})^*) &= \varphi(p_{n_*}(D_{12},H,P_2,...,P_2)) \\
																				 &= \varphi(p_{n_*}(D_{12},A_{11},P_2,...,P_2)) \\
																				 &+ \varphi(p_{n_*}(D_{12},B_{11},P_2,...,P_2)) = 0.
\end{aligned}
$$ 
Then, by injectivity of $\varphi$, $D_{12}H_{22}=0$, which implies that $H_{22} = 0$ by $\left(\spadesuit\right)$. Finally, given $D_{21}\in \A_{21}$, 
by Lemmas \ref{lema3} and \ref{lema4} we have
$$
\begin{aligned}
\varphi(D_{21}H_{11} - (D_{21}H_{11})^*) &= \varphi(p_{n_*}(D_{21},H,P_1,...,P_1)) \\
																				 &= \varphi(p_{n_*}(D_{21},A_{11},P_1,...,P_1)) \\
																				 &+ \varphi(p_{n_*}(D_{21},B_{11},P_1,...,P_1)) \\
																				 &= \varphi(D_{21}A_{11} - (D_{21}A_{11})^*) \\
																				 &+ \varphi(D_{21}B_{11} - (D_{21}B_{11})^*) \\
																				 &= \varphi(D_{21}A_{11}) + \varphi(-(D_{21}A_{11})^*) \\
																				 &+ \varphi(D_{21}B_{11}) + \varphi(-(D_{21}B_{11})^*) \\
																				 &= \varphi(D_{21}A_{11} + D_{21}B_{11}) \\
																				 &+ \varphi(-(D_{21}A_{11})^*-(D_{21}B_{11})^*) \\
																				 &= \varphi(D_{21}(A_{11} + B_{11}) - (A_{11}^* + B_{11}^*)D_{21}^*),
\end{aligned}
$$ 
that is, $D_{21}H_{11} - (D_{21}H_{11})^* = D_{21}(A_{11} + B_{11}) - (A_{11}^* + B_{11}^*)D_{21}^*$, by injectivity of $\varphi$. Thus, 
$D_{21}(H_{11} - (A_{11} + B_{11})) = 0$, which implies that $H_{11} = A_{11} + B_{11}$ by $\left(\spadesuit\right)$.
\end{proof}

Now we are able to show that $\varphi$ is $*$-additive. Using Lemmas \ref{lema3}, \ref{lema4} and \ref{lema5} we have, for all $A,B \in \A$,
$$
\begin{aligned}
\varphi(A + B) &= \varphi(A_{11}+A_{12}+A_{21}+A_{22}+B_{11}+B_{12}+B_{21}+B_{22}) \\
               &= \varphi(A_{11}+B_{11})+\varphi(A_{12}+B_{12})+\varphi(A_{21}+B_{21})+\varphi(A_{22}+B_{22}) \\
               &= \varphi(A_{11})+\varphi(B_{11})+\varphi(A_{12})+\varphi(B_{12}) \\
							 &+\varphi(A_{21})+\varphi(B_{21})+\varphi(A_{22})+\varphi(B_{22}) \\
               &= \varphi(A_{11}+A_{12}+A_{21}+A_{22}) \\
							 &+ \varphi(B_{11}+B_{12}+B_{21}+B_{22}) = \varphi(A) + \varphi(B).
\end{aligned}
$$

Besides, given $A \in \A$, by additivity of $\varphi$
$$
\begin{aligned}
2^{n-2}(\varphi(A) - \varphi(A)^*) &= p_{n_*}(\varphi(A), I_{\A'},...,I_{\A'}) = \varphi(p_{n_*}(A,I_{\A},...,I_{\A})) \\
                          &= \varphi(2^{n-2}(A - A^*)) = 2^{n-2}\varphi(A - A^*) = 2^{n-2}(\varphi(A) - \varphi(A^*))
\end{aligned}
$$
and then we conclude that $\varphi(A^*) = \varphi(A)^*$. This completes the proof of Theorem \ref{mainthm1}. 

\begin{theorem}\label{mainthm2} 
Let $\A$ and $\A'$ be two $C^*$-algebras with identities $I_{\A}$ and $I_{\A'}$, respectively, and $P_1$ and $P_2 = I_{\A} - P_1$ nontrivial symmetric projections in $\A$. 
Suppose that $\A$ satisfies
\begin{eqnarray*}
  &&\left(\spadesuit\right) \ \ \  \ \ \  P_j\A X = \left\{0\right\} \ \ \  \mbox{implies} \ \ \ X = 0.	
\end{eqnarray*}
Even more, suppose that $\varphi: \A \rightarrow \A'$ is a complex scalar multiplication bijective unital map which satisfies
\begin{eqnarray*}
 &&\left(\clubsuit\right) \ \ \  \ \ \  \varphi(P_j)\A' Y = \left\{0\right\} \ \ \  \mbox{implies} \ \ \ Y = 0\\
 and && \\
 &&\left(\bullet\right)\varphi(p_{n_*}(A,B,\Xi,...,\Xi)) = p_{n_*}(\varphi(A),\varphi(B),\varphi(\Xi),...,\varphi(\Xi)),
\end{eqnarray*}
 for all $A, B \in \A$ and $\Xi \in \left\{P_1, P_2,I_{\A}\right\}$. Then $\varphi$ is $*$-isomorphism.
\end{theorem}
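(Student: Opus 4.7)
By Theorem \ref{mainthm1} (which applies because condition $(\spadesuit)$ is assumed), $\varphi$ is already $*$-additive. Combining this with the hypothesis that $\varphi$ preserves complex scalar multiplication shows that $\varphi$ is a bijective unital $\mathbb{C}$-linear $*$-preserving map, so the only thing left to establish is the multiplicativity $\varphi(AB) = \varphi(A)\varphi(B)$ for all $A, B \in \A$.

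The first step is to distill from $(\bullet)$ the pure $*$-Lie identity $\varphi([A,B]_*) = [\varphi(A), \varphi(B)]_*$. For $n = 2$ this is just $(\bullet)$ itself. For $n \geq 3$, a short induction yields $p_{n_*}(A, B, I_{\A}, \ldots, I_{\A}) = 2^{n-3}(AB + AB^* - BA^* - B^*A^*)$: the key observation is that $p_{3_*}(A,B,I_{\A}) = AB + AB^* - BA^* - B^*A^*$ is already skew-adjoint, so every subsequent bracket with $I_{\A}$ merely doubles the element. Plugging this formula into $(\bullet)$ with $\Xi = I_{\A}$, using $\mathbb{C}$-linearity and $*$-additivity to strip the scalar $2^{n-3}$ and distribute $\varphi$ over sums, then making the substitution $B \mapsto iB$ (so that $(iB)^* = -iB^*$) and adding the resulting equation to the original cancels the $B^*$-terms and delivers the $*$-Lie identity.

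The second step exploits the conjugate-linearity of $[\,\cdot\,,\,\cdot\,]_*$ in its first argument. The substitution $A \mapsto iA$ gives $[iA, B]_* = i(AB + BA^*)$ and $[i\varphi(A), \varphi(B)]_* = i(\varphi(A)\varphi(B) + \varphi(B)\varphi(A)^*)$, so together with $\mathbb{C}$-linearity the $*$-Lie identity produces the companion $*$-Jordan identity $\varphi(AB + BA^*) = \varphi(A)\varphi(B) + \varphi(B)\varphi(A)^*$. Adding this to the $*$-Lie identity halves to $\varphi(AB) = \varphi(A)\varphi(B)$, which completes the proof: $\varphi$ is a bijective, unital, $\mathbb{C}$-linear, multiplicative, $*$-preserving map, i.e., a $*$-isomorphism.

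The main obstacle is the explicit evaluation of the iterated bracket $p_{n_*}(A, B, I_{\A}, \ldots, I_{\A})$; once one observes that such brackets stabilize (up to a power of $2$) as soon as the argument becomes skew-adjoint, the remainder is a routine twist by $i$ that uses precisely the conjugate-linearity built into $[\,\cdot\,,\,\cdot\,]_*$. In this streamlined argument the hypothesis $(\spadesuit)$ enters only indirectly, through the invocation of Theorem \ref{mainthm1}, and $(\clubsuit)$ is not strictly needed. It is, however, the natural assumption to run a parallel block-by-block route in which one first identifies $\varphi(P_j)$ as a projection of $\A'$ and then transfers the Peirce decomposition from $\A$ to $\A'$ before checking multiplicativity block-wise, mirroring the structure used in the proof of Theorem \ref{mainthm1}.
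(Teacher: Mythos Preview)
Your argument is correct and noticeably more direct than the paper's. After invoking Theorem~\ref{mainthm1} for $*$-additivity, the paper proceeds block-wise: it first shows that $Q_j=\varphi(P_j)$ is a symmetric projection, then proves $\varphi(AP_j)=\varphi(A)\varphi(P_j)$ and $\varphi(P_jA)=\varphi(P_j)\varphi(A)$ by combining $p_{n_*}(A,P_j,I_{\A},\dots,I_{\A})$ with its $A\mapsto iA$ twist, transfers the Peirce decomposition to $\A'$, and finally checks multiplicativity on each pair $\A_{jk}\times\A_{kl}$; the diagonal case $\A_{jj}\times\A_{jj}$ genuinely uses $(\clubsuit)$. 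You bypass all of this by running the same $i$-twist trick with an \emph{arbitrary} $B$ in place of $P_j$: once you observe that $p_{3_*}(A,B,I_{\A})$ is skew-adjoint (so further brackets with $I_{\A}$ only contribute powers of $2$), the single identity from $(\bullet)$ with $\Xi=I_{\A}$, together with the substitutions $B\mapsto iB$ and then $A\mapsto iA$, collapses directly to $\varphi(AB)=\varphi(A)\varphi(B)$. Your route thus shows that $(\clubsuit)$ is redundant for the conclusion, whereas the paper's block-by-block argument cannot close the $\A_{jj}$ case without it; you also correctly anticipate that $(\clubsuit)$ is exactly the hypothesis needed for that alternative path.
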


With this hypothesis, we have already proved that $\varphi$ is $*$-additive. It remains for us to show that $\varphi$ is multiplicative. 
In order to do that we will prove some more lemmas. Firstly, we observe that, for any $A \in \A$,
\begin{remark}
Denoting by $\mathcal{R}(A)$ and $\mathcal{I}(A)$ the real part and imaginary part of $A \in \A$, respectively, we have
$$
\begin{aligned}
\varphi(2^{n-1}\mathcal{R}(A)) &= \varphi(2^{n-2}(A + A^*)) = \varphi(p_{n_*}(iA,-iI_{\A},I_{\A},...,I_{\A})) \\
                               &= p_{n_*}(i\varphi(A),-i\varphi(I_{\A}),I_{\A'},...,I_{\A'}) = 2^{n-2}(\varphi(A) + \varphi(A)^*) \\
															 &= 2^{n-1}\mathcal{R}(\varphi(A))
\end{aligned}															
$$
\noindent and
$$
\begin{aligned}
\varphi(2^{n-1}i\, \mathcal{I}(A)) &= \varphi(2^{n-2}(A - A^*)) = \varphi(p_{n_*}(A,I_{\A},...,I_{\A})) \\
                                    &= p_{n_*}(\varphi(A),I_{\A'},...,I_{\A'}) = 2^{n-2}(\varphi(A) - \varphi(A)^*) \\
															      &= 2^{n-1}i\,\mathcal{I}(\varphi(A)).
\end{aligned}
$$ 
\end{remark}

Even more,
\begin{claim}\label{claim4}
$Q_j = \varphi(P_j)$ is a symmetric projection in $\A'$, with $j\in \{1,2\}$.
\end{claim}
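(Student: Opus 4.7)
The plan is to verify the two defining properties of a symmetric projection for $Q_j = \varphi(P_j)$: self-adjointness $Q_j^* = Q_j$ and idempotency $Q_j^2 = Q_j$. The first is immediate from what has already been established. The end of the proof of Theorem~\ref{mainthm1} gives $\varphi(X^*) = \varphi(X)^*$ for every $X \in \A$, so applying this to the symmetric element $X = P_j$ yields $Q_j^* = \varphi(P_j^*) = \varphi(P_j) = Q_j$.

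For the idempotency, I would first establish the auxiliary identity
\[
  \varphi([X, C]_*) = [\varphi(X), \varphi(C)]_*
\]
for every $X \in \A$ and every self-adjoint $C \in \A$. When $n = 2$ this is a direct special case of $(\bullet)$. When $n \geq 3$, I would write a given self-adjoint $C$ as $C = B + B^*$ with $B = C/2$ and iterate the elementary relation $[Y, I_{\A}]_* = Y - Y^*$ to obtain
\[
  p_{n_*}(X, B, I_{\A}, \ldots, I_{\A}) = 2^{n-3}\,[X,\, B + B^*]_*,
\]
together with the analogous formula in $\A'$. Applying $(\bullet)$ with $\Xi = I_{\A}$, and using $\varphi(B) + \varphi(B)^* = \varphi(B + B^*)$, which follows from the $*$-additivity proved in Theorem~\ref{mainthm1}, the prefactor $2^{n-3}$ cancels and the displayed auxiliary identity drops out. (One checks that $[X, C]_*$ is skew-adjoint when $C = C^*$, so each bracket with $I_\A$ doubles it.)

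Specializing to $C = P_j$ gives $\varphi([A, P_j]_*) = [\varphi(A), Q_j]_*$ for every $A \in \A$. The key algebraic observation is that complex scalars can be used to isolate $AP_j$: a short calculation using $(iA)^* = -iA^*$ and $P_j^* = P_j$ shows
\[
  [A, P_j]_* - i\,[iA, P_j]_* = (AP_j - P_j A^*) + (AP_j + P_j A^*) = 2\,AP_j.
\]
Since $\varphi$ is $\mathbb{C}$-linear (by additivity combined with the complex scalar-multiplication hypothesis), applying $\varphi$ to both sides and running the identical computation in $\A'$ with $\varphi(A)$ in place of $A$ and $Q_j$ in place of $P_j$ collapses the right-hand side to $2\,\varphi(A)\,Q_j$, yielding $\varphi(AP_j) = \varphi(A)\,Q_j$ for every $A \in \A$. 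Setting $A = P_j$ then gives $Q_j = \varphi(P_j) = \varphi(P_j^2) = \varphi(P_j)\,Q_j = Q_j^2$. The part I expect to be the real obstacle is spotting the $\mathbb{C}$-linear combination $\tfrac{1}{2}([A, P_j]_* - i[iA, P_j]_*)$ that isolates $AP_j$; once this combination is in hand, the rest of the argument is routine bookkeeping with the preservation properties of $\varphi$ that are already available.
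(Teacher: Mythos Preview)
Your proof is correct. The self-adjointness of $Q_j$ follows exactly as you say from the $*$-preservation already proved at the end of Theorem~\ref{mainthm1}, and your derivation of $\varphi(AP_j)=\varphi(A)Q_j$ via the auxiliary identity $\varphi([X,C]_*)=[\varphi(X),\varphi(C)]_*$ for self-adjoint $C$ is sound; the computation $p_{n_*}(X,B,I_{\A},\ldots,I_{\A})=2^{n-3}[X,B+B^*]_*$ checks out, and the $\mathbb{C}$-linear combination isolating $AP_j$ is exactly the right trick.

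However, the paper takes a much shorter route for idempotency: it simply evaluates $p_{n_*}(iP_j,P_j,I_{\A},\ldots,I_{\A})=2^{n-1}iP_j$ directly, applies $(\bullet)$, and reads off $2^{n-1}iQ_j = 2^{n-1}iQ_j^2$ on the target side. No general intermediate identity is needed. What you have effectively done is prove the paper's later Lemma~\ref{lemanovo} (namely $\varphi(AP_j)=\varphi(A)\varphi(P_j)$) \emph{inside} the proof of Claim~\ref{claim4} and then specialize to $A=P_j$. This is not wrong, and it has the advantage that your argument immediately yields Lemma~\ref{lemanovo} as a by-product; but for Claim~\ref{claim4} in isolation it is considerably more work than the paper's two-line computation with the specific input $iP_j$.
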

\begin{proof}
Since $\varphi$ is a complex scalar multiplication, it follows that
$$
\begin{aligned}
2^{n-1}i\,Q_j &= 2^{n-1}i\varphi(P_j) = \varphi(2^{n-1}iP_j) = \varphi(p_{n_*}(iP_j,P_j,I_{\A},...,I_{\A})) \\
            &= p_{n_*}(i\varphi(P_j),\varphi(P_j),I_{\A'},...,I_{\A'})) = 2^{n-1}i\varphi(P_j)^2 = 2^{n-1}i\,{Q_j}^2.                                                          
\end{aligned}
$$
Then we can conclude that $Q_j = {Q_j}^2$. Moreover, since $P_j$ is a symmetric projection in $\A$ we have that $p_{n_*}(P_j,I_{\A},...,I_{\A}) = 0$. Besides,
$$
0 = \varphi(0) = \varphi(p_{n_*}(P_j,I_{\A},...,I_{\A})) = p_{n_*}(Q_j,I_{\A'},...,I_{\A'}). 
$$
Thus, $Q_j - {Q_j}^*=0$, that is, $Q_j = {Q_j}^*$.
\end{proof}

\begin{lemma}\label{lemanovo}
For all $A\in \A$, $\varphi(P_jA) = \varphi(P_j)\varphi(A)$ and $\varphi(AP_j) = \varphi(A)\varphi(P_j)$.
\end{lemma}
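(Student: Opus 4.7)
The plan is to reduce the claim to the single $*$-Lie commutator identity $\varphi([A,P_j]_*)=[\varphi(A),Q_j]_*$ and then polarize in $i$. The first step is to prove, by a short induction on $n$ that uses $P_j^*=P_j$ together with the elementary observation $[X,I_{\A}]_*=X-X^*$, that
$$
p_{n_*}(A,P_j,I_{\A},\ldots,I_{\A})=2^{\,n-2}[A,P_j]_*.
$$
Indeed, $p_{2_*}(A,P_j)=AP_j-P_jA^*$ is anti-self-adjoint (since $P_j=P_j^*$), so each additional bracket against $I_{\A}$ simply doubles it. Because $Q_j=\varphi(P_j)$ is also symmetric by Claim \ref{claim4}, the analogous formula holds with $\varphi(A)$, $Q_j$, $I_{\A'}$ in place of $A$, $P_j$, $I_{\A}$. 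Applying $(\bullet)$ with $B=P_j$, $\Xi=I_{\A}$, and cancelling the scalar $2^{n-2}$ by means of the complex scalar multiplication property of $\varphi$, I obtain
$$
\varphi(AP_j-P_jA^*)=\varphi(A)Q_j-Q_j\varphi(A)^*. \qquad (\star)
$$

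The next step is polarization. Replacing $A$ by $iA$ in $(\star)$ and using $\varphi(iA)=i\varphi(A)$ together with $(iA)^*=-iA^*$ yields
$$
\varphi(AP_j+P_jA^*)=\varphi(A)Q_j+Q_j\varphi(A)^*. \qquad (\star\star)
$$
Adding $(\star)$ and $(\star\star)$, invoking the $*$-additivity of $\varphi$ established in Theorem \ref{mainthm1}, and applying complex scalar multiplication once more gives $\varphi(AP_j)=\varphi(A)Q_j=\varphi(A)\varphi(P_j)$. Subtracting $(\star)$ from $(\star\star)$ gives $\varphi(2P_jA^*)=2Q_j\varphi(A)^*$; using the identity $\varphi(A)^*=\varphi(A^*)$ recorded at the end of the proof of Theorem \ref{mainthm1}, this becomes $\varphi(P_jA^*)=\varphi(P_j)\varphi(A^*)$. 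Since $A\in\A$ is arbitrary, replacing $A$ by $A^*$ produces $\varphi(P_jA)=\varphi(P_j)\varphi(A)$.

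The only mildly nontrivial step is the inductive evaluation of $p_{n_*}(A,P_j,I_{\A},\ldots,I_{\A})$; the rest is pure polarization and additivity. I do not anticipate any real obstacle here, since all of the structural inputs are already in place: the symmetry of $Q_j$ supplied by Claim \ref{claim4}, the complex linearity recorded in the Remark, and both the $*$-additivity and the adjoint-preservation $\varphi(A^*)=\varphi(A)^*$ established in Theorem \ref{mainthm1}. The case $n=2$ requires no induction at all, since $(\bullet)$ then gives $(\star)$ directly with $B=P_j$.
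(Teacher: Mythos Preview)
Your proof is correct and follows essentially the same route as the paper: compute $p_{n_*}(A,P_j,I_\A,\ldots,I_\A)=2^{n-2}(AP_j-P_jA^*)$, apply $(\bullet)$, polarize via $A\mapsto iA$, and combine using $*$-additivity and complex scalar multiplication. The only cosmetic difference is that the paper obtains the companion identity $\varphi(P_jA)=\varphi(P_j)\varphi(A)$ by repeating the argument ``in a similar way,'' whereas you extract it from the subtraction $(\star\star)-(\star)$ together with $\varphi(A^*)=\varphi(A)^*$; both are equally valid.
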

\begin{proof}
Firstly, observe that
$$
p_{n_*}(iA,P_j,I_{\A},...,I_{\A}) = 2^{n-2}i(AP_j + P_jA^*)
$$
and
$$p_{n_*}(A,P_j,I_{\A},...,I_{\A}) = 2^{n-2}(AP_j -P_jA^*).$$
Still, by $(\bullet)$ and $*$-additivity of $\varphi$,
$$
\begin{aligned}
\varphi(2^{n-2}i(AP_j + P_jA^*)) &= \varphi(p_{n_*}(iA,P_j,I_{\A},...,I_{\A})) \\
                                 &= p_{n_*}(\varphi(iA),\varphi(P_j),I_{\A'},...,I_{\A'}) \\
                                 &= 2^{n-2}i(\varphi(A)\varphi(P_j) + \varphi(P_j)\varphi(A)^*)
\end{aligned}
$$
and
$$
\begin{aligned}
\varphi(2^{n-2}(AP_j - P_jA^*)) &= \varphi(p_{n_*}(A,P_j,I_{\A},...,I_{\A})) \\
                                 &= p_{n_*}(\varphi(A),\varphi(P_j),I_{\A'},...,I_{\A'}) \\
                                 &= 2^{n-2}(\varphi(A)\varphi(P_j) - \varphi(P_j)\varphi(A)^*).
\end{aligned}
$$
Now, since $\varphi$ is $*$-additive, multiplying the second equality by $i$ and adding these two equations we obtain $\varphi(AP_j) = \varphi(A)\varphi(P_j)$.
The second statement is obtained in a similar way.
\end{proof}
\begin{lemma}\label{lemadireto}
$\varphi(\A_{jk}) \subset \A_{jk}'$.
\end{lemma}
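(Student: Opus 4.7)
The plan is direct: the inclusion is immediate from Lemma \ref{lemanovo} applied twice, once on each side. First I would observe the correct interpretation of $\A'_{jk}$. By Claim \ref{claim4}, each $Q_j = \varphi(P_j)$ is a symmetric projection in $\A'$. Since $\varphi$ is unital and $*$-additive, $Q_1 + Q_2 = \varphi(P_1) + \varphi(P_2) = \varphi(P_1 + P_2) = \varphi(I_{\A}) = I_{\A'}$, so $Q_1, Q_2$ furnish a Peirce decomposition of $\A'$ analogous to that of $\A$, with $\A'_{jk} = Q_j \A' Q_k$.

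Next, take an arbitrary $A_{jk} \in \A_{jk}$, so that $A_{jk} = P_j A_{jk} P_k$. Grouping the right-hand factor and applying the first identity of Lemma \ref{lemanovo} gives
$$\varphi(A_{jk}) = \varphi\bigl(P_j (A_{jk} P_k)\bigr) = \varphi(P_j)\,\varphi(A_{jk} P_k) = Q_j\,\varphi(A_{jk} P_k).$$
Then applying the second identity of Lemma \ref{lemanovo} to $A_{jk} P_k$ yields $\varphi(A_{jk} P_k) = \varphi(A_{jk})\,Q_k$. Substituting back,
$$\varphi(A_{jk}) = Q_j\,\varphi(A_{jk})\,Q_k \in Q_j \A' Q_k = \A'_{jk},$$
which is the desired conclusion.

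There is essentially no obstacle here: the statement is a clean corollary of the fact that $\varphi$ already respects multiplication by $P_j$ on either side, together with the remark that $Q_1 + Q_2 = I_{\A'}$. The only subtlety worth flagging is the (notational) identification of $\A'_{jk}$ with $Q_j \A' Q_k$, which is exactly what makes the two applications of Lemma \ref{lemanovo} land inside the correct Peirce component.
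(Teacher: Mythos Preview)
Your proof is correct and follows essentially the same route as the paper: write $A_{jk}=P_jA_{jk}P_k$ and apply Lemma \ref{lemanovo} twice to obtain $\varphi(A_{jk})=\varphi(P_j)\varphi(A_{jk})\varphi(P_k)\in\A'_{jk}$. The extra verification that $Q_1+Q_2=I_{\A'}$ and hence that $\A'_{jk}=Q_j\A'Q_k$ is a welcome clarification the paper leaves implicit.
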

\begin{proof}
Given $X\in \A_{jk}$, we have $X = P_jXP_k$ and then, by Lemma \ref{lemanovo},\linebreak $\varphi(X) = \varphi(P_j)\varphi(XP_k) = \varphi(P_j)\varphi(X)\varphi(P_k) \in \A_{jk}'$.
\end{proof}

\begin{lemma}\label{lemam2}
To $j\neq k:$
\begin{itemize}
\item If $A_{jk} \in \A_{jk}$ and $B_{kk} \in \A_{kk}$ then $\varphi(A_{jk}B_{kk})=\varphi(A_{jk})\varphi(B_{kk})$;
\item If $A_{jj} \in \A_{jj}$ and $B_{jk} \in \A_{jk}$ then $\varphi(A_{jj}B_{jk})=\varphi(A_{jj})\varphi(B_{jk})$;
\item If $A_{jk} \in \A_{jk}$ and $B_{kj} \in \A_{kj}$ then $\varphi(A_{jk}B_{kj})=\varphi(A_{jk})\varphi(B_{kj})$.
\end{itemize}
\end{lemma}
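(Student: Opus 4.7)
My plan is to apply the hypothesis $(\bullet)$ to carefully chosen tuples of the form $p_{n_*}(X,Y,\Xi,\ldots,\Xi)$ with $\Xi\in\{P_1,P_2\}$, and to exploit the decomposition $\A = \A_{11}\oplus\A_{12}\oplus\A_{21}\oplus\A_{22}$, Claim \ref{claim1}, the $*$-additivity obtained in Theorem \ref{mainthm1}, Lemma \ref{lemadireto}, and the complex-scalar hypothesis $\varphi(iX)=i\varphi(X)$. Under the orthogonalities $P_iP_\ell=0$ for $i\neq\ell$, the iterated $*$-Lie products collapse to very short expressions, and matching them on both sides of $(\bullet)$ yields the desired multiplicativities after comparing the appropriate Peirce components in $\A'$.

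For items (i) and (ii), fix $j\neq k$; the two items are entirely analogous, so I focus on (i). A direct induction on the number of iterated $P_k$ slots, using $A_{jk}B_{kk}\in\A_{jk}$ and Claim \ref{claim1}, gives
\[
p_{n_*}(A_{jk},B_{kk},P_k,\ldots,P_k) \;=\; A_{jk}B_{kk}-(A_{jk}B_{kk})^* \qquad (n\geq 3),
\]
while for $n=2$ one just has $[A_{jk},B_{kk}]_* = A_{jk}B_{kk} - B_{kk}A_{jk}^*$ with the two summands lying in $\A_{jk}$ and $\A_{kj}$ respectively. Applying $\varphi$ and using Lemma \ref{lemadireto}, both sides of $(\bullet)$ split as the sum of an $\A_{jk}'$-term and an $\A_{kj}'$-term; equating the $\A_{jk}'$-components delivers $\varphi(A_{jk}B_{kk})=\varphi(A_{jk})\varphi(B_{kk})$.

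Item (iii) is the main obstacle. The case $n=2$ is immediate, since $B_{kj}A_{jk}^*\in\A_{kj}\A_{kj}=\{0\}$ gives $[A_{jk},B_{kj}]_* = A_{jk}B_{kj}$, and the same cancellation occurs on the $\varphi$-side, so $(\bullet)$ reads directly as $\varphi(A_{jk}B_{kj})=\varphi(A_{jk})\varphi(B_{kj})$. For $n\geq 3$, however, the analogous computation
\[
p_{n_*}(A_{jk},B_{kj},P_j,\ldots,P_j) \;=\; 2^{n-3}\bigl(A_{jk}B_{kj}-(A_{jk}B_{kj})^*\bigr)
\]
lies entirely in $\A_{jj}$, so the Peirce-component trick only recovers the anti-self-adjoint identity
\[
\varphi(A_{jk}B_{kj}) - \varphi(A_{jk}B_{kj})^* \;=\; \varphi(A_{jk})\varphi(B_{kj}) - \bigl(\varphi(A_{jk})\varphi(B_{kj})\bigr)^*.
\]
To extract the self-adjoint half I would apply $(\bullet)$ to $(iA_{jk},B_{kj},P_j,\ldots,P_j)$; the very same induction together with $\varphi(iX) = i\varphi(X)$ yields
\[
\varphi(A_{jk}B_{kj}) + \varphi(A_{jk}B_{kj})^* \;=\; \varphi(A_{jk})\varphi(B_{kj}) + \bigl(\varphi(A_{jk})\varphi(B_{kj})\bigr)^*.
\]
Adding the two identities and dividing by $2$ gives $\varphi(A_{jk}B_{kj}) = \varphi(A_{jk})\varphi(B_{kj})$, completing the argument.
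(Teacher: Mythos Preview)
Your argument is correct. For item (iii) you follow exactly the paper's route: the identity $p_{n_*}(A_{jk},B_{kj},P_j,\ldots,P_j)=2^{n-3}\bigl(A_{jk}B_{kj}-(A_{jk}B_{kj})^*\bigr)$ yields the anti-self-adjoint half, and replacing $A_{jk}$ by $iA_{jk}$ (using the complex-scalar hypothesis) yields the self-adjoint half. Your separate treatment of $n=2$ via $B_{kj}A_{jk}^*\in\A_{kj}\A_{kj}=\{0\}$ is a nice shortcut.

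For items (i) and (ii) you take a slightly different finishing step than the paper. Both you and the paper arrive at the identity
\[
\varphi(A_{jk}B_{kk})-\varphi(A_{jk}B_{kk})^{*}=\varphi(A_{jk})\varphi(B_{kk})-(\varphi(A_{jk})\varphi(B_{kk}))^{*},
\]
but the paper then reads this as equality of imaginary parts and repeats the computation with $iA_{jk}$ to get equality of real parts. You instead observe, via Lemma~\ref{lemadireto}, that the four terms above lie in $\A'_{jk}$, $\A'_{kj}$, $\A'_{jk}$, $\A'_{kj}$ respectively, so directness of the Peirce decomposition in $\A'$ gives $\varphi(A_{jk}B_{kk})=\varphi(A_{jk})\varphi(B_{kk})$ immediately. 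This Peirce-component comparison is cleaner and, notably, does not use the complex-scalar hypothesis for these two items; it also handles the case $n=2$ (where the displayed $p_{n_*}$ formula of the paper is not literally of the form $X-X^{*}$) without extra work. The trade-off is that your method cannot finish item (iii), since there $A_{jk}B_{kj}$ and $(A_{jk}B_{kj})^{*}$ both lie in $\A'_{jj}$, which is precisely why you (and the paper) fall back on the $iA_{jk}$ trick in that case.
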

\begin{proof}
In order to prove the first statement, on the one hand, by Lemma \ref{lemadireto}
$$
\begin{aligned}
\varphi(A_{jk}B_{kk}) - \varphi(A_{jk}B_{kk})^* &= \varphi(A_{jk}B_{kk} - (A_{jk}B_{kk})^*) \\
                                                &= \varphi(p_{n_*}(A_{jk},B_{kk},P_k,...,P_k)) \\
                                                &= p_{n_*}(\varphi(A_{jk}), \varphi(B_{kk}), Q_k,...,Q_k)\\
                                                &= \varphi(A_{jk})\varphi(B_{kk}) - (\varphi(A_{jk})\varphi(B_{kk}))^*
\end{aligned}
$$
and then $\mathcal{I}(\varphi(A_{jk}B_{kk})) = \mathcal{I}(\varphi(A_{jk})\varphi(B_{kk}))$. On the other hand, using $iA_{jk}$ rather than $A_{jk}$ we obtain
$\mathcal{R}(\varphi(A_{jk}B_{kk})) = \mathcal{R}(\varphi(A_{jk})\varphi(B_{kk}))$. It concludes that $\varphi(A_{jk}B_{kk}) = \varphi(A_{jk})\varphi(B_{kk})$.

The others statements are proved in a similar way.
\end{proof}
\begin{lemma}\label{lemam3}
If $A_{jj},B_{jj} \in \A_{jj}$ then $\varphi(A_{jj}B_{jj}) = \varphi(A_{jj})\varphi(B_{jj})$.
\end{lemma}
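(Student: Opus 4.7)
The plan is to reduce the diagonal-block case to the cross-block multiplicativity already established in Lemma \ref{lemam2}, by means of a multiplier trick. One cannot expect to read off $\varphi(A_{jj}B_{jj}) = \varphi(A_{jj})\varphi(B_{jj})$ directly from $(\bullet)$, because $p_{n_*}(A_{jj},B_{jj},P_j,\ldots,P_j)$ is built out of both $A_{jj}B_{jj}$ and $B_{jj}A_{jj}^*$ (and similarly with $P_k$), so the $*$-Lie polynomial by itself cannot isolate the product. The idea is instead to sandwich $A_{jj}B_{jj}$ against a test element of $\A_{kj}$ (with $k\neq j$), where Lemma \ref{lemam2} already controls $\varphi$.

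Concretely, fix $k\neq j$ and pick an arbitrary $D_{kj}\in\A_{kj}$. Consider the triple product $D_{kj}A_{jj}B_{jj}$. Parenthesized as $D_{kj}\cdot(A_{jj}B_{jj})$, this is a product in $\A_{kj}\cdot\A_{jj}$, so the first item of Lemma \ref{lemam2} (with the roles of $j$ and $k$ interchanged) yields
\[
\varphi(D_{kj}A_{jj}B_{jj}) \;=\; \varphi(D_{kj})\,\varphi(A_{jj}B_{jj}).
\]
Parenthesized the other way as $(D_{kj}A_{jj})\cdot B_{jj}$, the factor $D_{kj}A_{jj}$ lies in $\A_{kj}$, and a second application of the same item gives
\[
\varphi(D_{kj}A_{jj}B_{jj}) \;=\; \varphi(D_{kj}A_{jj})\,\varphi(B_{jj}) \;=\; \varphi(D_{kj})\,\varphi(A_{jj})\,\varphi(B_{jj}).
\]
Subtracting and setting $Y := \varphi(A_{jj}B_{jj}) - \varphi(A_{jj})\varphi(B_{jj})$ gives $\varphi(D_{kj})\,Y = 0$ for every $D_{kj}\in\A_{kj}$.

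To close the argument I would first record that $\varphi(\A_{kj}) = Q_k\A'Q_j = \A'_{kj}$: the inclusion $\subseteq$ is immediate from Lemma \ref{lemanovo} applied twice, and the reverse inclusion follows because any $Q_k\varphi(X)Q_j$ equals $\varphi(P_kXP_j)$, again by Lemma \ref{lemanovo}, with $X$ supplied by surjectivity of $\varphi$. Thus $\A'_{kj}Y = 0$. Lemma \ref{lemadireto} places $Y$ in $\A'_{jj}$, so $Y=Q_jY$, and therefore $Q_k\A'Y = Q_k\A'Q_jY = \A'_{kj}Y = 0$. Hypothesis $(\clubsuit)$, used with $k$ in place of $j$, forces $Y=0$, which is exactly the desired identity $\varphi(A_{jj}B_{jj}) = \varphi(A_{jj})\varphi(B_{jj})$.

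The only mildly delicate point is the last one: $(\clubsuit)$ demands an annihilator condition against the whole of $\A'$, whereas our relation naturally lives only inside $\A'_{kj}$. Exploiting $Y\in\A'_{jj}$ to fill in the missing blocks is what converts $\A'_{kj}Y=0$ into $Q_k\A'Y = 0$, after which the hypothesis takes over. Everything else is routine bookkeeping with Lemmas \ref{lemam2}, \ref{lemadireto}, and \ref{lemanovo}.
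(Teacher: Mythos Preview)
Your argument is correct and follows essentially the same route as the paper: both proofs left-multiply by an arbitrary test element $X_{kj}\in\A_{kj}$, apply Lemma~\ref{lemam2} twice to obtain $\varphi(X_{kj})\bigl(\varphi(A_{jj}B_{jj})-\varphi(A_{jj})\varphi(B_{jj})\bigr)=0$, and then invoke $(\clubsuit)$ together with Lemma~\ref{lemadireto}. Your write-up is in fact a bit more careful at the last step, since you explicitly justify $\varphi(\A_{kj})=\A'_{kj}$ via surjectivity and Lemma~\ref{lemanovo} before passing to $Q_k\A'Y=0$, whereas the paper states this conclusion directly.
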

\begin{proof}
Let $X_{kj}$ be an element of $\A_{kj}$, with $j \neq k$. Using Lemma \ref{lemam2} we obtain
$$
\varphi(X_{kj})\varphi(A_{jj}B_{jj}) = \varphi(X_{kj}A_{jj}B_{jj}) = \varphi(X_{kj}A_{jj})\varphi(B_{jj}) = \varphi(X_{kj})\varphi(A_{jj})\varphi(B_{jj}),
$$
that is,
$$
\varphi(X_{kj})(\varphi(A_{jj}B_{jj}) - \varphi(A_{jj})\varphi(B_{jj})) = 0.
$$
Now, by Lemma \ref{lemadireto}, $\varphi(X_{kj}) \in \A'_{kj}$ as well as $\varphi(A_{jj}B_{jj})$ and $\varphi(A_{jj})\varphi(B_{jj}) \in \A'_{jj}$.
Then, $Q_k\A'(\varphi(A_{jj}B_{jj}) - \varphi(A_{jj})\varphi(B_{jj})) = 0$, which implies that $\varphi(A_{jj}B_{jj}) = \varphi(A_{jj})\varphi(B_{jj})$ by $\left(\clubsuit\right)$.
\end{proof}
By additivity of $\varphi$ and Lemmas \ref{lemam2} and \ref{lemam3}, it follows, for all $A$, $B \in \A$, that $\varphi(AB) = \varphi(A)\varphi(B)$.
It concludes the proof of Theorem \ref{mainthm2}.

\section{Corollaries} 

Let us present some consequences of our main result. The first one provides the conjecture that appears in \cite{Ferco} to the case of multiplicative $*$-Lie-type maps:

\begin{corollary}   
Let $\A$ and $\A'$ be two $C^*$-algebras with identities $I_\A$ and $I_{\A'}$, respectively, and $P_1$ and $P_2 = I_{\A} - P_1$ nontrivial symmetric projections in $\A$. Suppose that $\A$ satisfies
\begin{eqnarray*}
  &&\left(\spadesuit\right) \ \ \  \ \ \  P_j\A X = \left\{0\right\} \ \ \  \mbox{implies} \ \ \ X = 0.	
\end{eqnarray*}
Even more, suppose that $\varphi: \A \rightarrow \A'$ is a complex scalar multiplication bijective unital map which satisfies
\begin{eqnarray*}
 &&\left(\clubsuit\right) \ \ \  \ \ \  \varphi(P_j)\A' Y = \left\{0\right\} \ \ \  \mbox{implies} \ \ \ Y = 0.
\end{eqnarray*}
Then $\varphi: \A \rightarrow \A'$ is a multiplicative $*$-Lie $n$-map if and only if $\varphi$ is a
$*$-isomorphism.
\end{corollary}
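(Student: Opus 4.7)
The plan is to derive this corollary directly from Theorem \ref{mainthm2}, since the bulk of the technical work has already been carried out there. The corollary is essentially a packaging of Theorem \ref{mainthm2} together with the (easy) observation that any $*$-isomorphism preserves the polynomials $p_{n_*}$.

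For the forward direction, suppose $\varphi$ is a multiplicative $*$-Lie $n$-map, i.e.,
\[
\varphi(p_{n_*}(x_1,\ldots,x_n)) = p_{n_*}(\varphi(x_1),\ldots,\varphi(x_n))
\]
for all choices of $x_1,\ldots,x_n \in \A$. Then in particular, for any $A, B \in \A$ and $\Xi \in \{P_1, P_2, I_\A\}$, by specializing $x_1 = A$, $x_2 = B$, and $x_3 = \cdots = x_n = \Xi$ we obtain precisely condition $(\bullet)$ of Theorem \ref{mainthm2}. Since all other hypotheses of that theorem (bijectivity, unitality, complex scalar multiplication, $(\spadesuit)$, $(\clubsuit)$) are assumed in the corollary, Theorem \ref{mainthm2} applies verbatim and yields that $\varphi$ is a $*$-isomorphism.

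For the converse direction, assume $\varphi : \A \to \A'$ is a $*$-isomorphism. Then $\varphi$ is additive, multiplicative, and $*$-preserving, so for any $X, Y \in \A$,
\[
\varphi([X,Y]_*) = \varphi(XY - YX^*) = \varphi(X)\varphi(Y) - \varphi(Y)\varphi(X)^* = [\varphi(X), \varphi(Y)]_*,
\]
which is the case $n=2$. Proceeding by induction on $n \geq 2$, assuming $\varphi(p_{(n-1)_*}(x_1,\ldots,x_{n-1})) = p_{(n-1)_*}(\varphi(x_1),\ldots,\varphi(x_{n-1}))$, we apply the $n=2$ identity to the pair $(p_{(n-1)_*}(x_1,\ldots,x_{n-1}), x_n)$ to conclude
\[
\varphi(p_{n_*}(x_1,\ldots,x_n)) = p_{n_*}(\varphi(x_1),\ldots,\varphi(x_n)).
\]
Hence $\varphi$ is a multiplicative $*$-Lie $n$-map.

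There is no substantive obstacle in either direction: the forward implication is a pure application of Theorem \ref{mainthm2} once one observes that the $n$-variable identity specializes to the identity $(\bullet)$, and the converse is a routine induction using only that a $*$-isomorphism preserves sums, products, and the involution. The corollary should therefore be stated and proved in just a few lines.
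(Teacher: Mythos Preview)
Your proof is correct and matches the paper's approach: the paper states this corollary as an immediate consequence of Theorem~\ref{mainthm2} without supplying an explicit proof, and your argument is precisely the natural justification---specializing the full $*$-Lie $n$-map identity to obtain condition $(\bullet)$ and invoking Theorem~\ref{mainthm2} for one direction, and a routine induction on $n$ for the converse.
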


Observing that prime $C^*$-algebras satisfy $(\spadesuit), (\clubsuit)$ we have the following result: 

\begin{corollary} 
Let $\A$ and $\A'$ be prime $C^*$-algebras with identities $I_{\A}$ and $I_{\A'}$, respectively, and $P_1$ and $P_2 = I_{\A} - P_1$ nontrivial projections in $\A$. Then a complex scalar multiplication $\varphi: \A \rightarrow \A'$ is a bijective unital multiplicative $*$-Lie $n$-map if and only if $\varphi$ is a $*$-isomorphism.
\end{corollary}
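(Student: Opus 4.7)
The plan is to reduce the corollary to the previous one (equivalently, to Theorem \ref{mainthm2}) by checking that the technical hypotheses $(\spadesuit)$ and $(\clubsuit)$ are automatic in the prime setting, and that the requirement that $P_1, P_2$ be symmetric is also automatic in a $C^*$-algebra.

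First I would note that in a $C^*$-algebra a projection is by convention a self-adjoint idempotent, so any nontrivial projection $P_j$ is in particular symmetric; thus the "symmetric projection" hypothesis of the preceding corollary is satisfied for free. Recall that $\A$ is prime when $A\A B = \{0\}$ forces $A=0$ or $B=0$.

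Next I would verify the two annihilator-style conditions. For $(\spadesuit)$: if $P_j \A X = \{0\}$, then since $P_j$ is nontrivial we have $P_j \neq 0$, and primeness of $\A$ immediately forces $X = 0$. For $(\clubsuit)$: by Claim \ref{claim3} (applied to the forward direction, where $\varphi$ is assumed to be a multiplicative $*$-Lie $n$-map), or by definition (in the backward direction, where $\varphi$ is a $*$-isomorphism), we have $\varphi(0) = 0$, so the bijection $\varphi$ sends the nonzero element $P_j$ to a nonzero element $\varphi(P_j)$; primeness of $\A'$ then yields that $\varphi(P_j)\A' Y = \{0\}$ implies $Y = 0$.

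With these two verifications in hand, the forward implication (a complex scalar multiplication bijective unital multiplicative $*$-Lie $n$-map is a $*$-isomorphism) is nothing but the previous corollary applied to $\A, \A'$. The backward implication is straightforward: if $\varphi$ is a $*$-isomorphism, then it preserves products and the involution, hence preserves each bracket $[X,Y]_* = XY - YX^*$, and by induction on $n$ it preserves $p_{n_*}$; this also implies $\varphi$ is unital and complex-linear, so in particular a complex scalar multiplication bijective unital multiplicative $*$-Lie $n$-map. I do not anticipate any real obstacle here, since the whole content is in translating the primeness condition into the hypotheses $(\spadesuit)$ and $(\clubsuit)$ already used in the main theorems.
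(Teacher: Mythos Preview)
Your proposal is correct and follows exactly the route the paper takes: the paper simply remarks that prime $C^*$-algebras satisfy $(\spadesuit)$ and $(\clubsuit)$ and then invokes the preceding corollary (equivalently, Theorem~\ref{mainthm2}). Your write-up merely makes explicit the two easy verifications (nontriviality of $P_j$ and of $\varphi(P_j)$ together with primeness) and the routine backward implication, which the paper leaves to the reader.
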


A von Neumann algebra $\mathcal{M}$ is a weakly closed, self-adjoint algebra of operators on a Hilbert space $\mathcal{H}$ containing the identity operator $I$. As an application on von Neumann algebras we have the following: 

\begin{corollary}
Let $\mathcal{M}$ be a von Neumann algebra without central summands of type $I_1$. Then a complex scalar multiplication $\varphi: \mathcal{M} \rightarrow \mathcal{M}$ is a bijective unital multiplicative $*$-Lie $n$-map if and only if $\varphi$ is a $*$-isomorphism.
\end{corollary}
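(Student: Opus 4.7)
The converse is immediate from the definitions of $*$-isomorphism and multiplicative $*$-Lie $n$-map. For the direct implication, the plan is to invoke Theorem \ref{mainthm2}; this requires exhibiting a nontrivial symmetric projection $P_{1} \in \mathcal{M}$ such that $P_{1}$ and $P_{2} = I - P_{1}$ satisfy $(\spadesuit)$ while their images under $\varphi$ satisfy $(\clubsuit)$.

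First, I would appeal to the standard structural fact that a von Neumann algebra $\mathcal{M}$ without central summands of type $I_{1}$ admits a nontrivial projection $P_{1}$ such that both $P_{1}$ and $P_{2} = I - P_{1}$ have central carrier equal to $I$; one convenient route is the central direct integral $\mathcal{M} \cong \int^{\oplus} \mathcal{M}_{t}\, d\mu(t)$ whose fibers $\mathcal{M}_{t}$ are factors not of type $I_{1}$, in which any nontrivial projection automatically has full central carrier. For such $P_{j}$ the condition $(\spadesuit)$ follows by the usual central-carrier argument: $P_{j}\mathcal{M} X = \{0\}$ yields $X^{*}\mathcal{M} P_{j} = \{0\}$, so $X^{*}$ annihilates the two-sided ideal generated by $P_{j}$, whose weak-operator closure equals $\overline{P_{j}}\mathcal{M} = \mathcal{M}$; hence $X = 0$.

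For $(\clubsuit)$, I would first invoke Theorem \ref{mainthm1} (which uses only $(\spadesuit)$) to conclude that $\varphi$ is $*$-additive, so that complex scalar multiplication upgrades $\varphi$ to a $\mathbb{C}$-linear map. Claim \ref{claim4}, whose proof relies only on $*$-additivity, complex scalar multiplication, and $(\bullet)$, then shows $Q_{j} := \varphi(P_{j})$ is a symmetric projection in $\mathcal{M}$ with $Q_{1} + Q_{2} = I$. The principal obstacle is verifying that $\overline{Q_{j}} = I$. The plan here is to exploit that $\varphi$ is a unital bijective $\mathbb{C}$-linear $*$-preserving map compatible with the $p_{n_{*}}$ products to show that $\varphi$ sends central projections of $\mathcal{M}$ to central projections of $\mathcal{M}$, so that $\varphi$ respects the central direct integral decomposition; reducing to each fiber factor $\mathcal{M}_{t}$, where every nontrivial projection has full central carrier, yields $\overline{Q_{j}} = I$. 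With both $(\spadesuit)$ and $(\clubsuit)$ in hand, Theorem \ref{mainthm2} applies and $\varphi$ is a $*$-isomorphism.
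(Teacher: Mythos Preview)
The paper's argument is much shorter than yours: it simply cites \cite{Bai} and \cite{Mie} for the existence of a nontrivial projection $P_1\in\mathcal{M}$ for which $(\spadesuit)$ holds, and then invokes Theorem~\ref{mainthm2} directly. In particular, the paper does \emph{not} separately verify $(\clubsuit)$; it is silently absorbed into the appeal to Theorem~\ref{mainthm2}. Your treatment of $(\spadesuit)$ via central carriers is correct and matches in spirit what the cited references provide, though the direct-integral detour is more machinery than needed.

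Where your proposal has a genuine gap is the plan for $(\clubsuit)$. You want to show that $\varphi$ carries central projections to central projections and then push $\varphi$ through the central direct integral fiberwise. Neither step is justified by what you have in hand. Preservation of $p_{n_*}$ only controls expressions of the form $\varphi(A)\varphi(Z)-\varphi(Z)\varphi(A)^{*}$, not the ordinary commutator $\varphi(A)\varphi(Z)-\varphi(Z)\varphi(A)$, so centrality of $\varphi(Z)$ cannot be read off without already knowing $\varphi$ is multiplicative---which is precisely what you are trying to prove. And even granting that, a $\mathbb{C}$-linear $*$-bijection has no reason to be decomposable along a direct integral. If you wish to actually close this gap (which the paper leaves open as well), a workable route is to note that Theorem~\ref{mainthm1}, Claim~\ref{claim4}, Lemma~\ref{lemanovo}, Lemma~\ref{lemadireto}, and Lemma~\ref{lemam2} use only $(\spadesuit)$; bijectivity of $\varphi$ then forces $\varphi(\mathfrak{A}_{kj})=Q_k\mathcal{M}Q_j$, and for $Y=\varphi(W)\in Q_j\mathcal{M}Q_j$ with $Q_k\mathcal{M}Y=0$ one obtains $\varphi(X_{kj}W)=\varphi(X_{kj})\varphi(W)=0$ for every $X_{kj}\in\mathfrak{A}_{kj}$, hence $P_k\mathcal{M}W=0$ and $W=0$ by $(\spadesuit)$. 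That is exactly the instance of $(\clubsuit)$ needed in Lemma~\ref{lemam3}.
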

\begin{proof}
Let $\mathcal{M}$ be a von Neumann algebra. It is shown in \cite{Bai} and \cite{Mie} that if a von Neumann algebra has no central summands of type $I_1$, then $\mathcal{M}$ satisfies the following assumption: 
\begin{itemize}
\item $P_j\mathcal{M}X = \left\{0\right\} \Rightarrow X = 0$.
\end{itemize}
Thus, by Theorem \ref{mainthm2} the corollary is true.
\end{proof}

 To finish, $\mathcal{M}$ is a factor von Neumann algebra if its center only contains the scalar operators. It is well known that a factor von Neumann algebra is prime and then we have the following:

\begin{corollary}
Let $\mathcal{M}$ be a factor von Neumann algebra. Then a complex scalar multiplication $\varphi: \mathcal{M} \rightarrow \mathcal{M}$ is a bijective unital multiplicative $*$-Lie $n$-map if and only if $\varphi$ is a $*$-isomorphism.
\end{corollary}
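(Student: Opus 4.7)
The plan is to derive this corollary as a direct specialization of the preceding corollary for prime $C^*$-algebras. The forward implication is immediate, since every $*$-isomorphism preserves the polynomial $p_{n_*}$, so the content lies entirely in the reverse implication.

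The key observation, stated in the paragraph preceding the corollary, is the classical fact that every factor von Neumann algebra is prime. Primeness means $A\mathcal{M}B=\{0\}$ implies $A=0$ or $B=0$, so in particular both hypotheses $(\spadesuit)$ and $(\clubsuit)$ are automatically fulfilled. The condition $(\clubsuit)$ also requires $\varphi(P_j)\neq 0$, but this is free: by Claim \ref{claim3} we have $\varphi(0)=0$, and since $P_j\neq 0$ the injectivity of $\varphi$ rules out $\varphi(P_j)=0$.

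What remains is to verify that a factor admits nontrivial symmetric projections $P_1, P_2=I-P_1$. Every projection in a von Neumann algebra is self-adjoint by definition, so the symmetry requirement is automatic. If $\mathcal{M}$ is a type $I_1$ factor then $\mathcal{M}\cong \mathbb{C}$ and the statement is trivial: a unital complex-linear bijection on $\mathbb{C}$ is the identity, which is a $*$-isomorphism. Otherwise $\mathcal{M}$ has nontrivial projections (for instance, any nontrivial spectral projection of a non-scalar self-adjoint element), so we may fix such a $P_1$ and apply the preceding prime-$C^*$-algebra corollary with $\mathcal{A}=\mathcal{A}'=\mathcal{M}$ to conclude that $\varphi$ is a $*$-isomorphism.

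The main obstacle, if any, is only the bookkeeping of classical facts (primeness of factors, existence of nontrivial projections away from type $I_1$, self-adjointness of projections). All the real work has already been carried out in Theorems~\ref{mainthm1} and~\ref{mainthm2}, so no new technical machinery is needed here.
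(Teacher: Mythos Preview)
Your proposal is correct and follows the same approach as the paper: the paper simply remarks that factor von Neumann algebras are prime and invokes the prime-$C^*$-algebra corollary, without giving a separate proof. You are in fact more careful than the paper, since you explicitly handle the type $I_1$ edge case $\mathcal{M}\cong\mathbb{C}$ (where no nontrivial projection exists) and verify $\varphi(P_j)\neq 0$, points the paper passes over in silence.
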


\end{document}